\documentclass[12pt]{amsart}
\usepackage{amsmath,amsthm,epsfig,amssymb,amscd,amsfonts} 
\usepackage[all]{xy}
\usepackage{young}
\newtheorem{thm}[subsection]{Theorem}

\newtheorem{lem}[subsection]{Lemma}

\newtheorem{rem}[subsection]{Remark}
\theoremstyle{definition}
\newtheorem{Def}[subsection]{Definition}
\newtheorem{Not}[subsection]{Notation}

\newtheorem{proposition-definition}[subsection]{Proposition-Definition}

\begin{normalsize} \end{normalsize}

\newcommand{\Pa}{{\mathcal P}}
\newcommand{\LR}{{\mathcal L}}

\newcommand{\ZZ}{{\mathbb Z}}
\newcommand{\QQ}{{\mathbb Q}}
\newcommand{\PP}{{\mathbb P}}
\newcommand{\NN}{{\mathbb N}}

\newcommand{\OOO}{{\mathcal O}}

\newcommand{\AAA}{{\mathcal A}}

\numberwithin{equation}{section}

\author{F. Laytimi}

\address {F. L.: Math\'ematiques - b\^{a}t. M2, Universit\'e Lille 1,
F-59655 Villeneuve d'Ascq Cedex, France}

\email {fatima.laytimi@univ-lille1.fr}

\author{W. Nahm}

\address{W. N.: Dublin Institute for Advanced Studies,
10 Burlington Road, Dublin 4, Ireland}

\email{wnahm@stp.dias.ie}

\subjclass{14F17}

\title{Positivity of vector bundles and dominance}

\begin{document}

\date{}

\begin{abstract} 
Let $E$ be a vector bundle and $S_a$, $S_b$ the Schur functors associated to partitions $a$ and $b$. Previously we have shown that ampleness of $S_aE$ implies ampleness of $S_bE$ when $a$ is greater than $b$ in the dominance partial order. Here we prove that this result generalizes to $k$-ample, semiample and nef vector bundles.
Our proof uses the common algebraic nature of these three properties and an investigation of the Littlewood-Richardson rules.
\end{abstract}

\maketitle

\section{Algebraic Positivities} \setcounter{page}{1}
Various forms of positivity for line bundles have been studied in great depth, but for vector bundles even basic issues remain underexplored, despite old foundational work (\cite{Hart}, \cite{Sommese}) and a more recent resurgence following \cite{Lazarsfeld}. Our own motivation comes from connectedness and vanishing theorems \cite{Laytimi}, but here we shall explore more basic properties.

We will consider vector bundles $E$ on a fixed projective manifold $X$. Since we refer to \cite{Sommese}, we work with complex numbers, though extensions to other fields of characteristic zero are straightforward. When not stated otherwise, we transfer positivity properties from line bundles to vector bundles by the procedure of Hartshorne 
in \cite{Hart}, who was inspired by Grothendieck's approach in EGA II. Accordingly, a positivity property of $E$ is defined as the corresponding property of the line bundle $\OOO_{\PP E}(1)$. The points of $\PP E$ are given by a point $x$ in $X$ and a hyperplane in the fibre $E_x$ of $E$ over $x$. We realize them as hyperplanes annihilated by $span(u)$, where $u$ is a vector in $E_x^*$. The fibre of $\OOO_{\PP E}(1)$ over such a point can be identified with $span(u)^*$. When $E$ is a line bundle $L$, then $\OOO_{\PP L}(1)$ has a natural identification with $L$, because $\PP L$ reduces to $X$ and $span(u)$ to $L_x^*$.

The most commonly encountered form of positivity is ampleness, but the much weaker semiampleness or the intermediate form of $k$-ampleness in the sense of Sommese \cite{Sommese} are also of considerable interest. Recall that a line bundle $L$ over $X$ is semiample when $L^n$ is spanned by global sections for some positive integer $n$. Such a semiample bundle is $k$-ample if the fibers of the corresponding map 
$X\rightarrow \PP(H^0(X, L^n))$ have at most dimension $k$. Thus $k$-ampleness is equivalent to ampleness for $k=0$ and becomes weaker for increasing $k$ until it reduces to semiampleness for $k$ equal to the dimension of $X$. A vector bundle
$E$ is semiample when
$\OOO_{\PP E}(n)$ is spanned by global sections for some positive integer $n$, and so on. 

We shall generalize some properties of ampleness to such weaker forms of positivity. To avoid unnecessary repetition, we introduce a general concept.

\begin{Def}
Let $\AAA$ be a property of isomorphism classes of vector bundles on $X$. We write $\AAA(E)$ when $E$ has the property $\AAA$. We define algebraic properties $\AAA$ by the following three conditions 
\begin{description}
 \item [Additivity] $\AAA(E\oplus F)$ if and only if $\AAA(E)$ and $\AAA(F)$
 \item[Multiplicativity] if $\AAA(E)$ and $\AAA(F)$ then $\AAA(E\otimes F)$
 \item [Exponent elimination] if  $\AAA(E^{\otimes n})$ then $\AAA(E)$.
\end{description}
\end{Def}
Note that $\AAA(E)$ implies $\AAA(E^{\otimes n})$ by the multiplicativity condition. It is well known that ampleness is algebraic (\cite{Hart}, see \cite{Barton} for characteristic $p$).

\begin{thm}
 Semiampleness, $k$-ampleness and nefness are algebraic properties.
\end{thm}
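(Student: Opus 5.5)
We have to verify additivity, multiplicativity and exponent elimination for each of the three properties. The uniform tool is a curve/subvariety-theoretic characterization, so that all statements reduce to restrictions to curves or to fibres of morphisms.

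\emph{Nefness.} Use the characterization that $E$ is nef if and only if for every morphism $f:C\to X$ from a smooth projective curve, every quotient line bundle of $f^*E$ has nonnegative degree. Equivalently, $\OOO_{\PP E}(1)$ has nonnegative degree on every curve in $\PP E$.
\begin{enumerate}
\item Additivity: quotients of $E$ and $F$ are quotients of $E\oplus F$. Conversely, a quotient line bundle of $f^*E\oplus f^*F$ receives a nonzero map from $f^*E$ or from $f^*F$, and a nonzero map from a nef bundle to a line bundle on a curve forces nonnegative degree.
\item Multiplicativity: on a curve, nefness of a bundle is equivalent to $\mu_{\min}\ge0$ of its Harder--Narasimhan filtration. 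In characteristic zero, tensor products of semistable bundles are semistable, so $\mu_{\min}(E\otimes F)=\mu_{\min}(E)+\mu_{\min}(F)$.
\item Exponent elimination: by the same identity, $\mu_{\min}(E^{\otimes n})=n\,\mu_{\min}(E)$.
\end{enumerate}
Alternatively, one can cite the standard results in Lazarsfeld, Ch.~6.

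\emph{Semiampleness.} Direct sums are handled by the criterion that $E$ is semiample if and only if some symmetric power $S^mE$ is globally generated. This is because $H^0(\PP E,\OOO(m))=H^0(X,S^mE)$, and $\OOO(m)$ is generated exactly when $S^mE$ is. Then:
\begin{enumerate}
\item Since $S^m(E\oplus F)=\bigoplus S^iE\otimes S^{m-i}F$, and products of generated bundles are generated, additivity reduces to a bookkeeping of exponents. Replace $m$ by a common multiple, and use that $S^{i}E$ is a quotient of $S^{a}E\otimes\cdots$ when $i$ is a sum of multiples. The direction ``only if'' is immediate, since $S^mE$ is a summand.
\item For multiplicativity, the surjection $S^mE\otimes S^mF\to S^m(E\otimes F)$ settles the tensor case.
\item For exponent elimination, $S^{m}E$ should be recovered as a direct summand of $S^m(E^{\otimes n})$ up to the twist. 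This is the delicate point. I would instead pass to the Segre-type map $\PP E\to\PP(E^{\otimes n})$ via $u\mapsto u^{\otimes n}$, under which $\OOO_{\PP(E^{\otimes n})}(1)$ pulls back to $\OOO_{\PP E}(n)$. Global generation of $\OOO(m)$ upstairs then gives generation of $\OOO_{\PP E}(nm)$ downstairs. A similar morphism $\PP E\times_X\PP F\to\PP(E\otimes F)$ gives an alternative proof of multiplicativity, and the relevant quotient of the additivity case is $\PP E\subset\PP(E\oplus F)$.
\end{enumerate}

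\emph{$k$-ampleness.} With semiampleness in hand, one must control fibre dimensions of the maps given by $|\OOO(m)|$. A fibre of $\phi_{E\otimes F}$ pulled back to $\PP E\times_X\PP F$ is contained in a fibre of the product map $\phi_E\times\phi_F$ restricted to the fibre product. Its dimension is bounded by a point of $X$-direction of dimension at most $k$ plus fibres of $\PP$'s mapped injectively. Here I would use Sommese's equivalent formulation that $k$-ampleness means $\OOO(m)$ restricted to any subvariety of dimension $>k$ contained in a fibre is trivial, versus ample on fibres of $\PP E\to X$. Exponent elimination follows again from the finite map $\PP E\to\PP(E^{\otimes n})$, which preserves fibre dimensions. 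Additivity uses $\PP E,\PP F\subset\PP(E\oplus F)$ together with the join structure, because a positive-dimensional fibre in $\PP(E\oplus F)$ projects to one for $E$ or $F$.

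I expect the main obstacle to be the ``if'' direction of additivity and multiplicativity for $k$-ampleness. There, fibre-dimension bounds on $\PP(E\oplus F)$ and $\PP(E\otimes F)$ must be deduced from those on $\PP E$ and $\PP F$, and I would lean on Sommese's results for these.
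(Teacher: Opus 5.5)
There is a genuine gap in your semiampleness argument, and it carries over to your $k$-ampleness sketch. Your criterion ``$E$ is semiample iff some $S^mE$ is globally generated'' is false for the definition used here, which asks that $\OOO_{\PP E}(m)$ be generated. Only one implication holds: generation of $S^mE$ implies generation of $\OOO_{\PP E}(m)$, since $\pi^*S^mE\to\OOO_{\PP E}(m)$ is surjective. The converse fails because a section of $S^mE$ is only tested against pure powers $u^m$, $u\in E_x^*$. Your criterion is Demailly's strong semiampleness, which the paper notes is not additive. Take $L^2\cong\OOO_X$ with $H^0(L)=0$: then $\OOO_{\PP(L\oplus\OOO_X)}(2)$ is generated by the two sections from $H^0(L^2)\oplus H^0(\OOO_X)$, yet every $S^m(L\oplus\OOO_X)=\bigoplus_{i=0}^mL^i$ contains the summand $L$. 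This is exactly where your ``bookkeeping of exponents'' breaks: $S^m(E\oplus F)$ always contains $E\otimes S^{m-1}F$, so no choice of $m$ avoids summands $S^iE\otimes S^{m-i}F$ with $i$ not a multiple of the generating exponent.

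Your multiplicativity step fails too: there is no surjection $S^mE\otimes S^mF\to S^m(E\otimes F)$. By Cauchy's formula $S^m(E\otimes F)=\bigoplus_{|\lambda|=m}S_\lambda E\otimes S_\lambda F$, so $S^mE\otimes S^mF$ is a proper summand (ranks $9$ versus $10$ for two rank-$2$ bundles and $m=2$). Your Segre alternative points the wrong way. The map $\PP E\times_X\PP F\to\PP(E\otimes F)$ only hits classes $[u\otimes w]$ of decomposable tensors. Pulling back therefore only shows that semiampleness of $E\otimes F$ implies that of $\OOO(1,1)$ on the fibre product. It says nothing about generation of $\OOO_{\PP(E\otimes F)}(m)$ at $[\xi]$ with $\xi\in E_x^*\otimes F_x^*$ of rank $\geq 2$, and the same objection applies to your fibre-dimension argument for $E\otimes F$ in the $k$-ample case.

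Additivity is easily repaired by working on $\PP(E\oplus F)$ directly. Take a common $m$ for $E$ and $F$, and a point $[u+v]$ with, say, $u\neq 0$. Pick $s\in H^0(S^mE)\subset H^0(S^m(E\oplus F))$ with $\langle s,u^m\rangle\neq 0$. Since $s$ pairs only with the $S^mE^*$-component of $(u+v)^m$, you get $\langle s,(u+v)^m\rangle=\langle s,u^m\rangle\neq 0$. The converse is restriction to $\PP E\subset\PP(E\oplus F)$.

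Multiplicativity, however, needs a real argument that produces sections not vanishing on $\xi^m$ for arbitrary, non-decomposable $\xi$. The paper does not reprove this. It takes additivity and multiplicativity of semiampleness from the authors' earlier paper, and those for $k$-ampleness from Sommese's Corollary 1.10.

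The rest of your proposal is sound. Exponent elimination via $u\mapsto u^{\otimes n}$ and $j_n^*\OOO(1)=\OOO_{\PP E}(n)$ is exactly the paper's argument. Your Harder--Narasimhan treatment of nefness on curves ($\mu_{\min}$ is additive under $\otimes$ in characteristic zero) is a valid self-contained substitute for the paper's citation of Lazarsfeld, Thm.~6.2.12.
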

\begin{proof}For exponent elimination, we make use of the inclusion $j_n: \PP E\rightarrow\PP E^{\otimes n}$ given by
$u\mapsto u^n$, for which 
$$j_n^*\OOO_{\PP E^{\otimes n}}(1) = \OOO_{\PP E}(n).$$
Exponent elimination for $\AAA$ is immediate when it is true for line bundles $L$ and when $\AAA(L)$ implies $\AAA(j^*L)$ for injections $j$, because in this case  $\AAA (\OOO_{\PP E^{\otimes n}}(1))$ implies $\AAA (\OOO_{\PP E}(1)^n)$ by using $j_n^*$. For $k$-ampleness this is stated in (\cite{Sommese}, Corollary 1.9). 

We give the argument for semiampleness. Semiampleness of a line bundle $L$ means that there is a positive power $L^l$ that is generated by sections. Thus semiampleness of $E^{\otimes n}$ means that
$\OOO_{\PP E^{\otimes n}}(l)$ and consequently its $j_n^*$ image $\OOO_{\PP E}(1)^{nl}$ are generated by sections. Thus $\OOO_{\PP E}(1)$
and by definition $E$ are semiample, too.
For nef vector bundles, the required properties are proven in (\cite{Lazarsfeld}, Thm. 6.2.12).

In \cite{ample} we have proven additivity and multiplicativity for semiampleness. For $k$-ampleness additivity and multiplicativity are given by Corollary 1.10 in \cite{Sommese}.
\end{proof}

Bigness is not an algebraic property, because it is not additive. Indeed, if $E, F$ are vector bundles over $X$ and $E$ is big, then $E\oplus F$ is big, without condition on $F$. This is an immediate consequence of Proposition 3.4 in  \cite{4}. 

Even for semiampleness one has to be careful about the way one generalizes from line bundles to vector bundles. Demailly's definition of strong semiampleness of $E$ states that some symmetric power $S^rE$ is generated by sections (\cite{Demailly}, 1.1). This property is not additive. Indeed, choose $X$ and a line bundle $L$ on $X$ so that $L$ has no section, but $L^2$ is the trivial bundle $I$. Then $L$ and of course $I$ are Demailly semiample, but $L\oplus I$ is not. Demailly semiampleness is a stronger assumption than what we use, because it states that for any non-vanishing $U$ in $S^rE_x^*$ there is a section $s$ of $S^rE$ for which $\langle s(x), U\rangle \neq 0$, whereas we require this only for $U$ of the form $u^r$ with $u\in E_x^*$.

As exemplified by Demailly's definition, using symmetric powers is as natural as using tensor powers. In our definition of algebraic properties, replacement of $E^{\otimes n}$ by $S^nE$ yields an equivalent result. In one direction, for characteristic zero, $S^nE$ is a direct summand of $E^{\otimes n}$. Thus when $\AAA$ is additive, $\AAA(E^{\otimes n})$ implies $\AAA(S^nE)$. Conversely, by multiplicativity, $\AAA(S^nE)$
implies $\AAA((S^nE)^{\otimes N})$ for any positive integer $N$. For suitable $N$ we will show as part of our main result Theorem \ref{Thm} that $\AAA((S^nE)^{\otimes N})$ implies $\AAA(E^{\otimes Nn})$.
Indeed, both $\AAA((S^nE)^{\otimes N})$ and $\AAA(E^{\otimes Nn})$ can be written as direct sums of terms that are isomorphic to $S_aE$ for a finite set of Schur functors $S_a$, with certain nonvanishing multiplicities. We shall prove that for suitable $N$ the two sets of Schur functors are exactly the same. Thus when $\AAA$ is additive, $\AAA((S^nE)^{\otimes N})$ and $\AAA(E^{\otimes Nn})$ are equivalent. 

We recall some basic facts. The Schur functors $S_a$ are labeled by integral partitions $a$. Let $E$ be a vector bundle of rank $d$ on $X$. 
Let $a=(a_1,\ldots, a_r)$ with $a_i\geq a_{i+1}$, $a_r>0$, be a partition of length $r$ with $r\leq d$. Then $S_aE$ is a nontrivial vector bundle on $X$. Recall that the weight of $a$ is given by
$$|a|=\sum_{k=1}^r a_i,$$ 
that $S_a E = S^{|a|}E$ for $r=1$ and $S_aE = \Lambda^{|a|}E$ for $a_1=1$. More generally,  $S_aE$ is isomorphic to a direct summand of $E^{\otimes|a|}$ for any $a$.

For ease of notation, let $a_k=0$ for $k>r$. Partitions are partially ordered by dominance, a relation we write as $\succsim$. 
For $a, b$ of equal weight, $a \succsim b$ means that
$$\sum_{k=1}^l a_i \geq \sum_{k=1}^l b_i$$
for $l=1,\ldots,d$. For arbitrary $a, b$ one can compare $|b|a$ and $|a|b$, because both partitions have the weight $|a||b|$. When
$|b|a \succsim |a|b$ and $|a|b \succsim |b|a$
then $|b|a = |a|b$, so that $a$ and $b$ are proportional, for example $b=na$.

When $|b|a \succsim |a|b$ and $S_a E$ is ample, then $S_bE$ is ample, as we proved in \cite{ample}. Here our main result is a generalisation of this fact to any algebraic positivity.

\begin{thm}
 Let $a,b$ be partitions so that $|b|a\succsim |a|b$. When $\AAA$ is an algebraic property, then $\AAA(S_a E)$ implies
 $\AAA(S_b E)$. In particular, for positive integers $n$, $\AAA(S_{na} E)$ is equivalent to $\AAA(S_a E)$.
\end{thm}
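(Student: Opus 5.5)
The plan is to reduce everything to a statement about which irreducible Schur functors occur in tensor powers, and then use the three defining conditions of an algebraic property. Set $p=|a|$ and $q=|b|$.

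\emph{Reduction.} Suppose $\AAA(S_aE)$. By multiplicativity, $\AAA((S_aE)^{\otimes N})$ holds for every $N\geq 1$. By additivity, $\AAA(S_cE)$ then holds for every partition $c$ such that $S_cE$ occurs as a direct summand of $(S_aE)^{\otimes N}$, i.e.\ for every $c$ with nonzero iterated Littlewood--Richardson coefficient $c^{c}_{a,\dots,a}$ and with $c$ of length at most $d$. Now take $N=qM$. I would show that for suitable $M$ every irreducible summand $S_cE$ of $(S_bE)^{\otimes pM}$ also occurs in $(S_aE)^{\otimes qM}$. Both tensor products have total weight $pqM$. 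Granting this, additivity gives $\AAA((S_bE)^{\otimes pM})$. Exponent elimination then yields $\AAA(S_bE)$.

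For the final claim, note that $|a|\,na=|na|\,a$, so $a$ and $na$ dominate each other in the required sense. The two implications are then the special cases $(a,b)\to(na,a)$ and $(na,a)\to(a,b)$ of the first statement.

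\emph{Combinatorial core.} The set of partitions $c$ appearing in $(S_bE)^{\otimes m}$ is described by the Littlewood--Richardson semigroup. Every such $c$ satisfies $c\precsim mb$ in dominance, and conversely the summands of $E^{\otimes |a|}$ are exactly the $c\precsim(|a|)$ of length $\le d$. I would prove the following key lemma: for $N$ large and divisible enough, the summands of $(S_aE)^{\otimes N}$ are exactly the partitions $c$ of weight $N|a|$ with $c\precsim Na$ and length at most $d$. One should probably also require a lattice or saturation condition, but dominance by $Na$ already forces $c$ to lie in the right region.

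Granting the lemma, the conclusion follows from the chain
\[
c\precsim pMb=M(|a|b)\precsim M(|b|a)=qMa,
\]
so every summand $c$ of $(S_bE)^{\otimes pM}$ is a summand of $(S_aE)^{\otimes qM}$.

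The lemma contains the main obstacle. One has to show that every dominated $c$ actually occurs once $N$ is large. I would try three ingredients:
\begin{itemize}
\item Write $Na-c$ as a sum of ``raising'' moves (transfer one box from row $i$ to row $j>i$).
\item Use the Horn/saturation structure of LR cones (Knutson--Tao) together with the semigroup property: if $c_1$ occurs in the $N_1$-th power and $c_2$ in the $N_2$-th, then $c_1+c_2$ occurs in the $(N_1+N_2)$-th.
\item Use that $S_a\otimes S_a$ contains the summands $a+a-\epsilon_i+\epsilon_j$ when they are partitions.
\end{itemize}
The difficulty is that a single copy of $a$ may not allow a large move. Starting from $c=Na$ and moving boxes one at a time, each step is realized inside a bounded product $S_a^{\otimes k}$ (for example $S^{|a|}\subset S_a^{\otimes k}$-type phenomena via columns). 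For the comparison, it would suffice to prove the weaker statement that the summand set of $(S_aE)^{\otimes N}$ contains $\{c\precsim Na\}$ for $N$ divisible by a fixed integer. Showing this, using additivity over partitions as a semigroup, is the step I expect to require the real work.
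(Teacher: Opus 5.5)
Your reduction is the paper's own proof of this statement. With $M|b|=N|a|$ (your $pM$ and $qM$), multiplicativity and additivity give $\AAA$ for every summand of $(S_aE)^{\otimes N}$. Every summand $c$ of $(S_bE)^{\otimes M}$ satisfies $c\preccurlyeq Mb\preccurlyeq Na$. Additivity and exponent elimination then conclude. (For the equivalence, your second substitution should read $(a,b)\mapsto(a,na)$.)

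Your key lemma is exactly Theorem~\ref{Thm}, taken in $\Pa(l)$ with $l$ the rank of $E$. The paper proves it in Section~2 and only cites it here. It holds for all $n\geq N_A$, with no divisibility or extra lattice condition. So the architecture is right. However, your proposal leaves unproved the one step that carries all the mathematical content, and the ingredients you list would not prove it.

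The difficulty is not realizing a single box move inside a bounded tensor power. It is paying for a number of moves that grows linearly in $N$ with only $N$ factors, and knowing which pieces to use. Take $A=(2)$ and $l=3$. The only partition of the form $2A-\epsilon_i+\epsilon_j$ is $(3,1)$, and indeed every summand of $A^{\otimes 2}=(4)\oplus(3,1)\oplus(2,2)$ has at most two rows. So sums of such pieces never reach $(2k,2k,2k)\preccurlyeq 3kA$; you need the genuinely threefold piece $(2,2,2)\lhd A^{\otimes 3}$. Appealing to Horn inequalities or saturation does not remove this: you would still have to show that every $c\preccurlyeq Na$ lies in the real cone cut out by the Horn-type inequalities for the $N$-fold product, which is the same extremal problem.

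The paper supplies two ideas you are missing.
\begin{enumerate}
\item The rational cone $\{B: B\preccurlyeq nA \text{ for some } n\}$ is generated by the block-averaged partitions $G_J(A)$. Each of these occurs in $A^{\otimes L(l)}$ because $|A|D(l)\lhd A^{\otimes l}$ (Lemmas~\ref{Atensorl} and~\ref{Gintensor}).
\item The semigroup generated by the $G_J(A)$ misses lattice points. The paper therefore splits the cone into simplicial cones and reduces to finitely many fractional remainders $P$. Using the auxiliary partitions $H_J(A,m,n)\lhd A^{\otimes L(l)}$, it shows that $kG_J(A)+B\lhd A^{\otimes (kL(l)+1)}$ whenever $B\prec A$ at distance $k$ (Lemma~\ref{AmultPP}). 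In other words, each box move is paid for by one copy of $G_J(A)$.
\end{enumerate}
Together these show that only finitely many dominated partitions fail to occur, which is what produces $N_A$.
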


For the proof of this theorem we will use Theorem \ref{Thm}, which does not depend on this introduction. Let $M, N$ be positive integers so that $M|a|b = N|b|a$. By multiplicativity and additivity, the algebraic property $\AAA(S_a E)$ carries over to direct summands of $(S_a E)^{\otimes N}$. By Theorem \ref{Thm}, for suitable $N$ any direct summand of $(S_b E)^{\otimes M}$ is isomorphic to a direct summand of $(S_a E)^{\otimes N}$. By additivity and exponent elimination this establishes $\AAA(S_b E)$.

Our proof for the special property of ampleness given in \cite{ample} was based on properties of the Littlewood-Richardson rules, supplemented by a cohomological criterion. Our new proof is based on the Littlewood-Richardson rules only, but needs a slightly deeper investigation of their structure. On the other hand, it also covers the case of ampleness. Theorem \ref{Thm} will be proved in the next section.

Some readers may be interested in combinatorics but not geometry. Accordingly, section 2 is self-contained and does not refer to section 1.

\section{Partitions and their algebra} \setcounter{page}{1}
Let $\NN=\{1,2,\ldots\}$,  $\NN_0=\{0,1,2,\ldots\}$. Specific conventions for this section are the following. The letters $i,j,k,m,n,r,s,t,u$ will refer to elements of $\NN_0$. In contexts where they only refer to elements of $\NN$ this will be stated explicitely. The letters $\alpha,\beta,\gamma,\delta$ will refer to elements of $\NN$, except for Kronecker's $\delta$, which is defined by $\delta_{mn}=1$ for $m=n$ and $\delta_{mn}=0$ otherwise. The letter $l$ (for 'length') refers to elements of $\NN$. We put  $L(l)={\mathit lcm}\{1,\ldots,l\}$. We denote set differences by $\setminus$ and the concatenation with a finite sequence by $\circ$, like $(2,3)\circ(2,\ldots)= (2,3,2,\ldots)$.
 For one-element sets $\{(\alpha,\beta)\}$ in $\NN\times\NN$ we put $\rho(\{(\alpha,\beta)\}) =\alpha$, where $\rho$ stands for 'row'. The letter $I$ (for 'interval') stands for non-empty subseries of $\NN$. When $I=I_1\circ\ldots\circ I_\sigma$, we call $J=\{I_1,\ldots,I_\iota\}$ a subdivision of $I$. The letter $\iota$ will always refer to the number of terms in a given subdivision $J$.
For $\alpha\in I$ we define $J(\alpha)$ by $\alpha\in I_{J(\alpha)}$.

A partition (of infinite length) is a weakly decreasing series of numbers in $\NN_0$ that converges to zero. Its strict length is the number of terms in $\NN$. The set of partitions will be denoted by $\Pa$. Generic partitions will be denoted by capital letters, their elements by the corresponding lowercase letters, like $A=(a_1,a_2,\ldots)$ or $A^\sim=(\tilde a_1, \tilde a_2,\ldots,..)$. The letters $A,B,C,P$ will always refer to partitions. $\Pa$ forms a semigroup under standard addition, which will be denoted by $+$. Addition of several terms will be written with $\Sigma$ as usual, addition of $n$ equal terms $A$ yields $nA$. The Young tableau of $A$ is the set $Y(A)=\{(i,j)\in \NN\times \NN\ \vert\ 1\leq j\leq a_i\}$. In view of $Y(A)$ we call $a_i$ the length of row $i$ of $A$. Transposition in $\NN\times \NN$ will be denoted as in $(i,j)^\sim = (j,i)$. We define the partition $A^\sim$ by $Y(A^\sim)=Y(A)^\sim$. The length of column $i$ of $A$ is $\tilde a_i$.

Let $\Lambda^r=(r,0,\ldots)^\sim$. We have
\begin{equation}\label{+}
 A=\sum_{i=1}^\infty (\tilde a_i,0,\ldots)^\sim,
\end{equation}
so that $\Pa$ is generated by $\Lambda^1,\Lambda^2,\ldots$ as a semigroup.
The weight $|A|$ of $A$ is the cardinality $|Y(A)|$. The distance between two partitions $A,B$ of equal weight is defined as $|Y(A)\setminus Y(B)|$. We write $|A\setminus B| =|Y(A)\setminus Y(B)|$ and $\rho(A\setminus B) =\rho(Y(A)\setminus Y(B))$ for $|A\setminus B|=1$.

For $l\in\NN$, weakly decreasing series of $l$ numbers in $\NN_0$ are called partitions of length $l$. They form a semigroup $\Pa(l)$ with a natural injection into $\Pa$. Usually, we do not mention the length of partitions when it is arbitrary apart from constraints by the immediate context.
Among these constraints, expressions like $A+B$ are only defined for two partitions of the same length.

We denote the restriction of a partition $A$ to $I$ by $A[I]$, so that $A[I_1\circ I_2]=A[I_1]\circ A[I_2]$. For $l=|I|$ there is a natural bijective map from $\Pa(l)$ to weakly decreasing functions from $I$ to $\NN_0$. For $A\in\Pa(l)$ we denote the corresponding image by $A(I)$.

\begin{Def}
We define the dominance partial order by
$A\succ B$ iff $|A|=|B|$, $A\neq B$ and $|A[(1,\ldots,r)]|\geq|B[(1,\ldots,r)]|$ for all $r\in \NN$. 
\end{Def}
 
Since $|A[(1,\ldots,r)]|=|A|$ when $r$ is larger than the strict length of $A$, any statement $A\succ B$ reduces to a finite set of inequalities. We have $\Lambda^{|A|}\preccurlyeq A$.

\begin{lem}\label{Psequence}
  For $A\succ B$, $|A\setminus B|=k$, there is an interpolating sequence $P^0,P^1, \ldots,P^k$ of partitions of equal weight, with distance 1 between adjacent terms and $ P^0=A, P^k=B$. For $0\leq i<j\leq k$ one has  $Y(A^i)\setminus Y(A^j)\subset Y(A)\setminus Y(B)$ and $Y(A^j)\setminus Y(A^i)\subset Y(B)\setminus Y(A)$.
\end{lem}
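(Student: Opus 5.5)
We argue by induction on $k=|A\setminus B|$, the distance between $A$ and $B$. The statement uses $A^i$ for the interpolating terms; we read these as $P^i$. We only need to produce $P^1$ with three properties: $A\succ P^1\succcurlyeq B$ (with $P^1=B$ allowed), $|P^1\setminus B|=k-1$, and $Y(P^1)$ obtained from $Y(A)$ by removing one box of $Y(A)\setminus Y(B)$ and adding one box of $Y(B)\setminus Y(A)$. Applying the induction hypothesis to the pair $(P^1,B)$ then gives $P^1,\ldots,P^k$. The containments for arbitrary $i<j$ follow because each step $P^{m}\to P^{m+1}$ removes a box of $Y(A)\setminus Y(B)$ and adds a box of $Y(B)\setminus Y(A)$. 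Since these two sets are disjoint, a removed box is never re-added and an added box is never removed. Hence $Y(P^i)\setminus Y(P^j)$ consists exactly of the boxes removed in steps $i+1,\ldots,j$, which lie in $Y(A)\setminus Y(B)$. Symmetrically, $Y(P^j)\setminus Y(P^i)$ consists of the boxes added in those steps, which lie in $Y(B)\setminus Y(A)$.

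To construct $P^1$, we move one box from a row where $A$ exceeds $B$ down to a lower row where $B$ exceeds $A$. Since $A\succ B$ and $A\ne B$, let $r$ be the first row with $a_r\ne b_r$. Dominance forces $a_r>b_r$. Let $s>r$ be the first row after $r$ with $a_s<b_s$; it exists because the weights are equal.

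We take the box to remove to be the last box $(r',a_{r'})$ of the lowest row $r'$ in the block of rows $r\le r'<s$ that satisfy $a_{r'}=a_r$ and $a_{r'}>b_{r'}$. We take the box to add to be $(s',a_{s'}+1)$, where $s'$ is the highest row $\le s$ with $a_{s'}=a_s$. Both choices keep the result weakly decreasing: removing from the last row of a block of equal length, and adding to the first row of such a block, preserve monotonicity. The removed box lies in $Y(A)\setminus Y(B)$ because $a_{r'}>b_{r'}$. The added box lies in $Y(B)\setminus Y(A)$ because $b_{s'}\ge b_s>a_s=a_{s'}$.

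It remains to check that the partial sums of $P^1$ still dominate those of $B$. The partial sums change only by $-1$ for indices $t$ in $[r',s'-1]$. In that range the inequality $|A[(1,\ldots,t)]|>|B[(1,\ldots,t)]|$ is strict, because $a_t\ge b_t$ for $t<s$ and the excess accumulated at row $r$ has not yet been compensated.

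The main obstacle is the bookkeeping in these boundary cases. One must make sure that $r'<s'$; that the block-end choices are compatible with $a_{r'}>b_{r'}$ and $b_{s'}>a_{s'}$; and that strictness holds throughout $[r',s'-1]$. This last point uses $a_t\ge b_t$ for $r\le t<s$, which follows from the minimality of $s$. The distance then drops by exactly one, so $|P^1\setminus B|=k-1$, and the induction closes.
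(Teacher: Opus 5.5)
Your proof is correct and follows essentially the paper's route: induction on $k$, with $P^1$ obtained by moving one box from a row where $a>b$ to the first row $s$ (the paper's $\alpha$) with $a_s<b_s$. The paper takes the source row to be the last $\beta<\alpha$ with $a_\beta>b_\beta$, so that $a_t=b_t$ strictly between $\beta$ and $\alpha$, whereas your choice of the last row of the block of rows of length $a_r$ works equally well, and the checks you flag hold. Every $t\ge r$ with $a_t=a_r$ satisfies $t<s$ (as $a_s<b_s\le b_r<a_r$) and $a_t>b_t$ (as $b_t\le b_r<a_r$), so $r'$ really ends that block. Your $s'$ always equals $s$, since a row $s'$ with $r<s'<s$ and $a_{s'}=a_s$ would give $a_{s'}\ge b_{s'}\ge b_s>a_s$. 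In the adjacent case $s=r'+1$ one has $a_{r'}=a_r\ge b_r+1\ge b_s+1\ge a_s+2$.
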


\begin{proof}

 For $k=1$ the statement is trivial. We use induction on $k$.
Let $\alpha$ be the smallest integer so that $a_\alpha<b_\alpha$. Let $\beta$ be the largest interger in $\{1,\ldots,\alpha\}$ so that $a_\beta>b_\beta$. Define $A^1$ by  $a^1_\beta=a^0_\beta-1$, $a^1_\alpha=a^0_\alpha+1$ and $a^1_\gamma=a^0_\gamma$ for $\gamma$ different from $\alpha,\beta$. Then $A^0\succ A^1\succ B$, $|A^1\setminus B|=k-1$ and we can use the induction hypothesis. For the final statement, note that $Y(A^1)\setminus Y(B)\subset Y(A)\setminus Y(B)$ and $Y(B)\setminus Y(A^1)\subset Y(B)\setminus Y(A)$. 
 \end{proof}
We need some well-known facts about the  Littlewood-Richardson algebra $\LR$, which we collect here for reference and to introduce our notation. 
$\LR$ is a commutative $\ZZ$-algebra, with $\Pa$ as a free basis
and $\Lambda^0$ as unit. The partitions $\Lambda^r$, $r\in \NN$, generate $\LR$ as an algebra. By abuse of language, the injection of $\Pa$ into $\LR$ will not be written down.
\begin{Not}
 Addition and multiplication in $\LR$ will be denoted by $\oplus,\otimes$. We write ${\mathcal M}\lhd{\mathcal N}$ iff $0\leq m_A\leq n_A$ for all $A\in\Pa$ when ${\mathcal M}=\oplus_{A\in\Pa}m_AA$, ${\mathcal N}=\oplus_{A\in\Pa}n_AA$.
 The integer $m_A$ is called the multiplicity of $A$ in ${\mathcal M}$. ${\mathcal M}\rhd{\mathcal N}$ means ${\mathcal N}\lhd{\mathcal M}$. We define a map $\bullet: \NN_0\times \LR\rightarrow \LR$ by $a\bullet{\mathcal M} = \oplus_{A\in\Pa}m'_A\ (a)\circ A$, where $m'_A=m_A$ iff $(a)\circ A\in\Pa$ and $m'_A=0$ otherwise.
 Sometimes we need to extend $+$ to an operation in $\LR$ that is distributive over $\oplus$. For this operation we write $\dotplus$, as in $A\dotplus(B\oplus C)=(A+B)\oplus(A+C)$.
 The $N$-fold sum of $A$ with respect to $\oplus$ could be denoted by $A^{\oplus N}$, but will not be used. The notation $NA$ refers to $+$, as stated above.
\end{Not}

The multiplication in $\LR$ is defined recursively by 
\begin{equation}\label{multALambda}
A\otimes\Lambda^r=(a_1+1)\circ(A_2\otimes\Lambda^{r-1})\oplus (a_1)\bullet(A_2\otimes\Lambda^r),\end{equation}
where $A\in \Pa$, $A=(a_1)\circ A_2$, $r>0$. Recall that $A\otimes\Lambda^0=A$.

Note that $A\otimes\Lambda^r$ is equal to $A+\Lambda^r$ up to the addition of terms that are smaller in the dominance partial order. Due to eq. (\ref{+}) this means that eq. (\ref{multALambda}) determines all products in $\LR$. 
The following facts are consequences of this definition of $\otimes$.

$A\otimes B\rhd C$ implies $|A|+|B|=|C|$, so that $\LR$ is graded by weight. 

For $r\leq s$ one has
\begin{equation}\label{mult2Lambda}
 \Lambda^r\otimes\Lambda^s=\oplus_{k=0}^r (\Lambda^{r-k}+\Lambda^{s+k}).\end{equation}
\begin{align}\label{mult+}
\nonumber {\mathit If}\ \   A_i\lhd B_i\otimes C_i\ \ {\mathit for}\ \  i=1,2,\ \ {\mathit then}\\
A_1+A_2\lhd(B_1+B_2)\otimes(C_1+C_2).
\end{align}
 The latter relation generalizes immediately to multiple sums. For $C_2=\Lambda^0$ one obtains the useful special case
 \begin{equation}\label{multinert}
 (A+B)\otimes C\rhd A\dotplus(B\otimes C).
 \end{equation}
 For $B=\Lambda^0$ this yields
 $$A\otimes C\rhd A+C$$
 and consequently
\begin{equation}\label{multSigma}
\Sigma_{i =1}^kA^i\lhd\otimes_{i =1}^kA^i.
\end{equation}

 When $A\otimes C\rhd (A+C)\oplus P$, then $P\prec (A+C)$. Some such partitions are obtained as follows.
 \begin{lem}\label{smaller} 
 Let $A^1\succ A^2$, $C^1\succ C^2$, with distances $|A^1\diagdown A^2|=1$ and $|C^1\diagdown C^2|=1$.
 Let $\rho(A^1\setminus A^2)\geq \rho(C^2\setminus C^1)$ and $\rho(C^1\setminus C^2)\geq \rho(A^2\setminus A^1)$. Then
  $$A^1\otimes C^1\rhd A^2+C^2.$$
 \end{lem}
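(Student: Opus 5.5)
The plan is to first unpack the hypotheses in terms of rows and check whether they can hold simultaneously at all. Since $|A^1\setminus A^2|=1$ and $|A^1|=|A^2|$, the partition $A^2$ arises from $A^1$ by moving one box from row $\alpha=\rho(A^1\setminus A^2)$ to row $\beta=\rho(A^2\setminus A^1)$, with $\alpha\neq\beta$. The dominance $A^1\succ A^2$ forces $\alpha<\beta$. Indeed, $|A^1[(1,\ldots,r)]|-|A^2[(1,\ldots,r)]|$ equals $1$ for $\alpha\le r<\beta$ and $0$ otherwise when $\alpha<\beta$, whereas it equals $-1$ for $\beta\le r<\alpha$ when $\beta<\alpha$, contradicting $A^1\succ A^2$. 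In the same way, $C^2$ arises from $C^1$ by moving a box from row $\gamma=\rho(C^1\setminus C^2)$ to row $\delta=\rho(C^2\setminus C^1)$, and $\gamma<\delta$.

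The two hypotheses on rows read $\alpha\ge\delta$ and $\gamma\ge\beta$. Combined with the previous paragraph they give the chain
$$\alpha\ \ge\ \delta\ >\ \gamma\ \ge\ \beta\ >\ \alpha,$$
which is a contradiction. So, read literally with $\rho$ the row index as defined above, the hypotheses of the lemma are never satisfied, and the statement holds vacuously. The first step of the proof is therefore just this bookkeeping of the four row indices, together with the dominance computation that fixes the direction of each box move.

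Because a vacuous lemma is unlikely to be what is intended, I would also prepare the substantive argument for the evident intended content. That content is that moving one box downward in $A^1$ and one box downward in $C^1$ still yields a constituent of $A^1\otimes C^1$ whenever the moves are suitably interlaced. The plan there is as follows.
\begin{enumerate}
\item Use (\ref{multinert}) to strip off common parts. Write $A^1=P+A'$ and $C^1=Q+C'$ with $A'$, $C'$ of small strict length, containing only the rows involved, or using columns via (\ref{+}).
\item Apply (\ref{mult+}), which reduces the claim to a statement about products of at most two $\Lambda$'s or two short partitions.
\item Verify that small statement directly from (\ref{mult2Lambda}) or from the recursion (\ref{multALambda}).
\end{enumerate}
For instance, moving the box of $\Lambda^r+\Lambda^s$ into $\Lambda^{r-1}+\Lambda^{s+1}$ is exactly the $k=1$ term of (\ref{mult2Lambda}).

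The main obstacle in that substantive version is choosing the decomposition into columns so that each removed box and each added box lie in a single factor pair where (\ref{mult2Lambda}) applies. This is precisely where the interlacing conditions on the rows would be used. For the statement as actually written, however, the contradiction above already completes the proof.
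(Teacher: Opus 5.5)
Your proof is correct, and it takes a genuinely different route from the paper's. You read $\rho$ as the row index. You note that $A^1\succ A^2$ at distance $1$ forces the box to move from row $\alpha$ down to row $\beta>\alpha$, and likewise $\gamma<\delta$. The stated inequalities then give the cycle $\alpha\ge\delta>\gamma\ge\beta>\alpha$, so the lemma holds vacuously.

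The paper instead argues computationally. It uses (\ref{mult+}) to discard all columns except the two in which $A^1,A^2$ differ and the two in which $C^1,C^2$ differ, and then reads the claim off (\ref{mult2Lambda}). That proof confirms your reading. Its display assigns $\Lambda^{\alpha_1}+\Lambda^{\alpha_2-1}$ to $A^1$ and $\Lambda^{\alpha_1-1}+\Lambda^{\alpha_2}$ to $A^2$ with $\alpha_1>\alpha_2$, which means $A^1\prec A^2$. As written, the display is false: for $\alpha_1=\gamma_1=2$, $\alpha_2=\gamma_2=1$ it asserts $\Lambda^2\otimes\Lambda^2\rhd 4\Lambda^1$, whereas $\Lambda^2\otimes\Lambda^2=2\Lambda^2\oplus(\Lambda^1+\Lambda^3)\oplus\Lambda^4$.

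What each route buys:
\begin{itemize}
\item Yours gives a complete, computation-free proof of the statement as written. It also exposes that this statement has no content, so it supplies nothing when invoked in the proof of Lemma \ref{HmultP}.
\item The paper's column reduction is the right mechanism for a meaningful version, once the directions are repaired.
\end{itemize}

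The condition you left as ``suitably interlaced'' is interval overlap. In your letters ($\alpha<\beta$, $\gamma<\delta$) the correct hypotheses are $\beta\ge\gamma$ and $\delta\ge\alpha$, i.e.\ $\rho(A^2\setminus A^1)\ge\rho(C^1\setminus C^2)$ and $\rho(C^2\setminus C^1)\ge\rho(A^1\setminus A^2)$. The differing columns are:
\begin{itemize}
\item $\Lambda^\alpha+\Lambda^{\beta-1}$ in $A^1$ versus $\Lambda^{\alpha-1}+\Lambda^\beta$ in $A^2$;
\item $\Lambda^\gamma+\Lambda^{\delta-1}$ in $C^1$ versus $\Lambda^{\gamma-1}+\Lambda^\delta$ in $C^2$.
\end{itemize}
Pair them crosswise. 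By (\ref{mult2Lambda}), $\Lambda^\alpha\otimes\Lambda^{\delta-1}\rhd\Lambda^{\alpha-1}+\Lambda^\delta$; this is the $k=1$ term if $\alpha<\delta$ and the $k=0$ term if $\alpha=\delta$. Symmetrically, $\Lambda^{\beta-1}\otimes\Lambda^\gamma\rhd\Lambda^{\gamma-1}+\Lambda^\beta$. Applying (\ref{mult+}) to these two pairs together with the untouched columns gives $A^1\otimes C^1\rhd A^2+C^2$. Without overlap the conclusion fails: for example, $(2)\otimes(2,2,2)=(4,2,2)\oplus(3,2,2,1)\oplus(2,2,2,2)$ does not contain $(3,3,1,1)$.
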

 \begin{proof}

 By eq. (\ref{mult+}) it is sufficient to consider the two columns for which $A^1$ differs from $A^2$ and the two columns for which $C^1$ differs from $C^2$. Thus one must prove that
$$(\Lambda^{\alpha_1}+\Lambda^{\alpha_2-1})\otimes
(\Lambda^{\gamma_1}+\Lambda^{\gamma_2-1})\rhd 
(\Lambda^{\alpha_1-1}+\Lambda^{\alpha_2}+
 \Lambda^{\gamma_1-1}+\Lambda^{\gamma_2}),$$
 where $\alpha_1>\alpha_2$, $\gamma_1>\gamma_2$, $\alpha_1\geq \gamma_2$, $\gamma_1\geq\alpha_2$.
 By eq. (\ref{mult+}) this follows immediately from eq. (\ref{mult2Lambda}).  \end{proof}

By the definition of $\otimes$ one has $Y(A\otimes B)\supseteq Y(A)$. Thus for any $l\in \NN_0$, the partitions of strict length greater than $l$ span an ideal of $\LR$. 
We denote the corresponding quotient algebra by $\LR(l)$. 
The elements of $\Pa(l)$ form a free basis of $\LR(l)$. In $\LR(l)$ the images
of $\Lambda^r$ with $0<r\leq l$ will still be denoted by 
$\Lambda^r$. As an algebra, $\LR(l)$ is generated by these elements.

\begin{Not}
For the image of $\Lambda^l$ in $\Pa(l)$ we also use the notation
$D(l)$, where $D$ stands for 'determinant'. For subseries $I$ of $\NN$ we denote the corresponding sequence by $D[I]$.
\end{Not}

\begin{equation}\label{multD}
(D(l)+A)\otimes B =D(l)\dotplus(A\otimes B).
\end{equation}

The dominance partial order carries over to $\Pa(l)$. 
For $|A|=nl+s$ with $s<l$ one has
$A\succcurlyeq nD(l)+\Lambda^s$.

 Let $J=(I_1,\ldots)$ be a subdivision of $\{1,\ldots,l\}$ and let $B,C\in\Pa(l)$.
 When $A_i\lhd B[I_i]\otimes C[I_i]$ for $i=1,\ldots,\iota$, then 
 $A_1\circ\ldots\circ A_{\iota}\in\Pa(l)$ and 
\begin{equation}\label{multcirc}
A_1\circ\ldots\circ A_{\iota}\lhd B\otimes C.
\end{equation}
This generalizes immediately to multiple products.

\begin{Not}
 For $A\in\Pa(l)$ let 
 $$\chi(A)=(-a_l,\ldots,-a_1).$$
 When  $J$ is a subdivision of $\{1,\ldots,l\}$, let
 $\chi_J(A)=\chi(A[I_1])\circ\ldots\circ \chi(A[I_{\iota}]).$
\end{Not}

\begin{lem}\label{chi} When $A+\chi(B)$ is a partition, then
 $$(A+\chi(B))\otimes B\rhd A.$$
\end{lem}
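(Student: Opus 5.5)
The plan is to reduce the statement to the fact that the product of $B$ and its ``dual'' contains a power of the determinant, and then to strip off that power of $D(l)$ using eq.~(\ref{multD}). Throughout, we work in $\LR(l)$ with $A,B\in\Pa(l)$, and we write $C=A+\chi(B)$, which is a partition by assumption.

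First we introduce the dual partition
$$B^*=\chi(B)+b_1D(l)=(b_1-b_l,\ldots,b_1-b_1)\in\Pa(l),$$
which is weakly decreasing with nonnegative entries. Then
$$C+b_1D(l)=A+B^*,$$
and both sides are partitions. Applying eq.~(\ref{multD}) $b_1$ times gives
$$(C+b_1D(l))\otimes B=b_1D(l)\dotplus(C\otimes B).$$
On the other hand, since $A$ and $B^*$ are partitions, eq.~(\ref{multinert}) gives
$$(A+B^*)\otimes B\rhd A\dotplus(B^*\otimes B).$$

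The key step is the claim $B^*\otimes B\rhd b_1D(l)$. To prove it, decompose both partitions into columns via eq.~(\ref{+}):
$$B=\sum_{i=1}^{b_1}\Lambda^{\tilde b_i}, \qquad B^*=\sum_{i=1}^{b_1}\Lambda^{\,l-\tilde b_{b_1+1-i}}.$$
The second identity holds because the Young diagram of $B^*$ is the complement of $Y(B)$ in the $l\times b_1$ rectangle, rotated by $180^\circ$. We pair each column $\Lambda^{l-r}$ of $B^*$ with the column $\Lambda^{r}$ of $B$ of complementary length. In eq.~(\ref{mult2Lambda}) the top term $k=\min(r,l-r)$ gives
$$\Lambda^{l-r}\otimes\Lambda^r\rhd\Lambda^0+\Lambda^l=D(l).$$
The degenerate cases $r=0$ and $r=l$ hold trivially, since $\Lambda^0$ is the unit. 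The multi-term version of eq.~(\ref{mult+}) then gives
$$B^*\otimes B\rhd\sum_{i=1}^{b_1}D(l)=b_1D(l).$$

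Combining the displays, we obtain
$$b_1D(l)\dotplus(C\otimes B)\rhd A\dotplus(B^*\otimes B)\rhd A+b_1D(l).$$
The second relation holds because $A\dotplus(\cdot)$ preserves $\rhd$: it maps the basis of $\LR(l)$ injectively and keeps coefficients nonnegative. Finally, the map $P\mapsto b_1D(l)+P$ is injective on $\Pa(l)$. Hence the multiplicity of $b_1D(l)+A$ in $b_1D(l)\dotplus(C\otimes B)$ equals the multiplicity of $A$ in $C\otimes B$. That multiplicity is therefore at least $1$, which is $C\otimes B\rhd A$.

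I expect the main obstacle to be the bookkeeping in the middle step. One must check that the columns of $B^*$ really are complementary to those of $B$ in the order described. One must also check that the hypotheses of eq.~(\ref{mult+}) and eq.~(\ref{multinert}) are met. In particular, every summand has to be a genuine partition of length $l$, which is exactly why we pass through $B^*$ rather than working with $\chi(B)$ directly.
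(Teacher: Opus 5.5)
Your proof is correct and is essentially the paper's argument: add $b_1D(l)$, use eqs.~(\ref{multD}) and (\ref{multinert}) to split off $A$, and reduce via eqs.~(\ref{+}) and (\ref{mult+}) to the column relation $(D(l)+\chi(\Lambda^r))\otimes\Lambda^r=\Lambda^{l-r}\otimes\Lambda^r\rhd D(l)$, which follows from eq.~(\ref{mult2Lambda}). You can drop the rotated-complement bookkeeping, because $\chi$ is additive and so directly $b_1D(l)+\chi(B)=\sum_{i=1}^{b_1}\bigl(D(l)+\chi(\Lambda^{\tilde b_i})\bigr)$.
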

\begin{proof}

 By eqs. (\ref{multD}) and (\ref{multinert}) one has 
$$b_1D(l)\dotplus(A+\chi(B))\otimes B \rhd A\dotplus (b_1D(l)+\chi(B))\otimes B.$$
By eqs. (\ref{+}) and (\ref{mult+}) it is sufficient to show that 
$$(D(l)+\chi(\Lambda^r))\otimes\Lambda^r\rhd D(l).$$
This follows from eq. (\ref{mult2Lambda}).  
\end{proof}

\begin{rem}
When $\chi$ is extended to a linear endomorphism of $\Pa(l)\otimes\ZZ$, it yields a symmetry of $\Pa(l)$ in the sense that
$$(mD+\chi(A))\otimes (nD+\chi(B))=(m+n)D+\chi(A\otimes B)$$
whenever $(mD+\chi(A))$ and $(nD+\chi(B))$ lie in $\Pa(l)$.
The reader may note that many of our statements are symmetric in this sense, when the corresponding replacements are made. In particular, when $A\succ B$, then
$mD+\chi(A)\prec mD+\chi(B)$.
\end{rem}

\begin{lem}\label{Atensorl}
 For any $A\in\Pa(l)$,\\ $|A|D(l) \lhd A^{\otimes l}$ and $|A|D(l)+\chi(A)\lhd A^{\otimes (l-1)}$.
\end{lem}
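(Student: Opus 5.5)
The plan is to prove the second inclusion first and then deduce the first from it using Lemma \ref{chi}. Throughout I use that the structure constants of $\LR$ (and of $\LR(l)$) are nonnegative. Hence ${\mathcal M}\lhd{\mathcal N}$ implies ${\mathcal M}\otimes{\mathcal K}\lhd{\mathcal N}\otimes{\mathcal K}$ for any ${\mathcal K}$ with nonnegative coefficients, and $\lhd$ is transitive.

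\emph{Reduction to one column.} By eq. (\ref{+}) write $A=\sum_i \Lambda^{r_i}$ with $0<r_i\leq l$. Since $\chi$ is additive on $\Pa(l)$ (it reverses and negates entries),
$$|A|D(l)+\chi(A)=\sum_i \bigl(r_iD(l)+\chi(\Lambda^{r_i})\bigr).$$
The multi-sum version of eq. (\ref{mult+}), applied with $l-1$ factors, gives: if $P_i\lhd(\Lambda^{r_i})^{\otimes(l-1)}$ for each $i$, then $\sum_iP_i\lhd A^{\otimes(l-1)}$. So it suffices to treat $A=\Lambda^r$. In that case
$$rD(l)+\chi(\Lambda^r)=(r,\ldots,r,r-1,\ldots,r-1)=(r-1)D(l)+\Lambda^{l-r}.$$

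\emph{The one-column case.} I claim that for every $k\geq0$
$$Q_k:=\lfloor rk/l\rfloor D(l)+\Lambda^{c_k}\lhd(\Lambda^r)^{\otimes k}\quad\text{in }\LR(l),\qquad c_k=rk-l\lfloor rk/l\rfloor .$$
The proof is by induction on $k$; the case $k=0$ is trivial. For the step, $Q_{k+1}\lhd Q_k\otimes\Lambda^r\lhd(\Lambda^r)^{\otimes(k+1)}$, and the second relation is the induction hypothesis tensored with $\Lambda^r$. For the first, eq. (\ref{multD}) gives
$$Q_k\otimes\Lambda^r=\lfloor rk/l\rfloor D(l)\dotplus(\Lambda^{c_k}\otimes\Lambda^r).$$
By eq. (\ref{mult2Lambda}), $\Lambda^{c_k}\otimes\Lambda^r$ contains a term of the form $\Lambda^{\min-j}+\Lambda^{\max+j}$ for each admissible $j$.
\begin{itemize}
\item If $c_k+r<l$, take the term $\Lambda^{c_k+r}$, which is allowed since $j$ ranges up to $\min(c_k,r)$.
\item If $c_k+r\geq l$, take $j$ so that the longer column becomes exactly $l$. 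This gives $D(l)+\Lambda^{c_k+r-l}$, with $0\leq c_k+r-l\leq\min(c_k,r)$.
\end{itemize}
Terms of $\LR$ with a column longer than $l$ vanish in $\LR(l)$, but the chosen terms have columns of length at most $l$, so they survive. This reproduces $Q_{k+1}$. Taking $k=l-1$ gives $rk=(r-1)l+(l-r)$, so $Q_{l-1}=(r-1)D(l)+\Lambda^{l-r}$, as required.

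\emph{First inclusion.} Put $A'=|A|D(l)$. Then $A'+\chi(A)=|A|D(l)+\chi(A)$ is a partition, since its entries $|A|-a_i\geq0$ are weakly decreasing. Lemma \ref{chi} gives $(|A|D(l)+\chi(A))\otimes A\rhd|A|D(l)$. Tensoring the second inclusion with $A$ then yields
$$A^{\otimes l}\rhd(|A|D(l)+\chi(A))\otimes A\rhd|A|D(l).$$

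The main obstacle I expect is the bookkeeping in the one-column induction. One must check that the index $j$ needed in eq. (\ref{mult2Lambda}) lies in the allowed range $0\leq j\leq\min(c_k,r)$. One must also check that passing to the quotient $\LR(l)$ is compatible with the use of eqs. (\ref{mult+}) and (\ref{multD}), i.e. that truncation only discards terms and never creates new ones.
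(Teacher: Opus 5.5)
Your proof is correct and follows the paper's route. You reduce to a single column $\Lambda^r$ via eqs. (\ref{+}) and (\ref{mult+}), then iterate eq. (\ref{mult2Lambda}) in $\LR(l)$, always keeping the unique lowest term in the dominance order; that term is exactly your $Q_k=\lfloor rk/l\rfloor D(l)+\Lambda^{c_k}$.

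The only difference is in the first inclusion. You derive it from the second via Lemma \ref{chi}, whereas the paper simply runs the same iteration one more step. Your own induction at $k=l$ already gives $Q_l=rD(l)$, so the detour through Lemma \ref{chi} is valid but unnecessary.
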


\begin{proof}

 By eqs. (\ref{+}) and (\ref{mult+}) it is sufficient to consider $A$ of the form $\Lambda^r$ with $1\leq r\leq l$. In this case, the result follows from iterating eq. (\ref{mult2Lambda}), keeping always the unique lowest term in the dominance partial order.
\end{proof}

After these generalities, we come to the specific calculations that we need for our proof.

\begin{lem}
 Let $r+s>0$, $t+u>0$, $r+t>0$, $s+u>0$, $r+t+s+u\leq l$. Then
\begin{equation}\label{exchange}
 (\Lambda^r+\Lambda^{l-t})\otimes (\Lambda^s+\Lambda^{l-u})\\
\rhd\ D(l) +\Lambda^{r+s-1}+\Lambda^{l-t-u+1}.
\end{equation}
\end{lem}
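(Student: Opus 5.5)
The plan is to compute the relevant Littlewood--Richardson coefficient directly. Applying eq. (\ref{mult+}) column by column is not enough here. Every splitting of the product into products of single columns yields a sum of four columns, whereas the target
$$T:=D(l)+\Lambda^{r+s-1}+\Lambda^{l-t-u+1}$$
has only three columns of generally different lengths. So the pieces $\Lambda^r,\Lambda^{l-t}$ and $\Lambda^s,\Lambda^{l-u}$ must genuinely interact. Set $A=\Lambda^r+\Lambda^{l-t}$.

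First, the hypotheses give $s\leq s+u+r+t-r-t\leq l-u$. Iterating eq. (\ref{mult2Lambda}) and telescoping then gives, in $\LR(l)$,
$$\Lambda^s+\Lambda^{l-u}=\Lambda^s\otimes\Lambda^{l-u}\ \ominus\ \Lambda^{s-1}\otimes\Lambda^{l-u+1}$$
for $s\geq1$, i.e. a two-term Jacobi--Trudi identity. When $s=0$ the factor is the single column $\Lambda^{l-u}$ and the second term is absent. Hence the multiplicity of $T$ in $A\otimes(\Lambda^s+\Lambda^{l-u})$ equals $N(l-u,s)-N(l-u+1,s-1)$. Here $N(p,q)$ denotes the multiplicity of $T$ in $A\otimes\Lambda^p\otimes\Lambda^q$.

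By the recursion (\ref{multALambda}) (the Pieri rule), $N(p,q)$ counts the partitions $P\in\Pa(l)$ such that $Y(P)\setminus Y(A)$ is a vertical strip of size $p$ and $Y(T)\setminus Y(P)$ is a vertical strip of size $q$. Since $A$ has two columns and $T$ three, with lengths at most $l$, such a $P$ has at most three columns. It is therefore determined by how many boxes of each strip go into columns $1,2,3$. So both counts reduce to counting integer triples subject to interlacing inequalities, namely that column lengths are weakly decreasing and that each column gains at most one box per row, which is the vertical strip condition.

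The main step is this count, and it is the obstacle. I would write the intermediate shape as $P=\Lambda^{p_1}+\Lambda^{p_2}+\Lambda^{p_3}$ with $p_1\le l$, $l-t\le p_1$, $r\le p_2$, $p_3\ge0$. The vertical-strip conditions from $A$ to $P$ and from $P$ to $T$ become the interlacing chains
$$l-t\le p_1\le l,\qquad r\le p_2\le\min(l-t,\ \bar t_1),\qquad 0\le p_3\le \min(r,\ \bar t_2),$$
where $\bar t_1\ge \bar t_2$ denote the second and third column lengths of $T$ after sorting. These come together with the size constraints $\sum p_i=|A|+p$.

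I would then show that the configurations for $(l-u+1,s-1)$ inject into those for $(l-u,s)$ by shifting one box from the first strip to the second. I would also exhibit one configuration for $(l-u,s)$ outside the image. A natural candidate is: fill column $1$ up to $l$ in the first step; put the remaining $l-u-t$ boxes of the first strip so that column $2$ or $3$ reaches exactly $l-t-u+1$ or $r+s-1$; then let the second strip finish. The hypotheses $r+s>0$, $t+u>0$, $r+t>0$, $s+u>0$ are exactly what guarantee that the target columns have positive length and that this extremal configuration exists. Because the sorted order of $l-t-u+1$ and $r+s-1$ in $T$ may vary, I would split into the cases $r+s-1\le l-t-u+1$ and $r+s-1>l-t-u+1$. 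In each case I would verify the injection and the extra configuration.

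Finally, the degenerate cases $r=0$, $s=0$, $t=0$ or $u=0$ I would treat separately. In each of them one factor contains a full column $D(l)$ or a single column. One can then instead use eq. (\ref{multD}), which strips off $D(l)$, together with eq. (\ref{mult2Lambda}), which directly produces $\Lambda^{r+s-1}$ or $\Lambda^{l-t-u+1}$ as the term with $k=1$.
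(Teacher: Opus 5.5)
Your route is essentially the paper's. The paper also settles $rstu=0$ directly from eqs.~(\ref{multALambda}) and (\ref{multD}); when only one of $r,s,t,u$ vanishes, this needs the Pieri recursion (\ref{multALambda}) for a column times a two-column shape, not just eq.~(\ref{mult2Lambda}) as you suggest. Otherwise the paper uses the same two-term Jacobi--Trudi identity for $\Lambda^s+\Lambda^{l-u}$ and obtains the multiplicity as $3-1$.

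Your count needs two small fixes, after which it closes at once. First, add the lower bound $p_2\geq r+s-1$, which comes from $Y(T)\setminus Y(P)$ being a vertical strip. Second, note that $r+s-1\leq l-t-u-1$ always, so your second case is vacuous. Then the count is immediate:
\begin{itemize}
\item For $(l-u,s)$, the weight of $P$ is one less than $l+(l-t-u+1)+r$. This leaves exactly three shapes: $P=\Lambda^{l-1}+\Lambda^{l-t-u+1}+\Lambda^{r}$, $\Lambda^{l}+\Lambda^{l-t-u}+\Lambda^{r}$ and $\Lambda^{l}+\Lambda^{l-t-u+1}+\Lambda^{r-1}$.
\item For $(l-u+1,s-1)$, only $P=\Lambda^{l}+\Lambda^{l-t-u+1}+\Lambda^{r}$ occurs.
\end{itemize}
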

\begin{proof}

 For $rstu=0$ this follows immediately from eqs. (\ref{multALambda}) and (\ref{multD}). In this case the multiplicity of the partition on the right hand side in the product under investigation is $1$. Otherwise we use 
$\Lambda^s+\Lambda^{l-u}=(\Lambda^s\otimes\Lambda^{l-u}) \ominus(\Lambda^{s-1}\otimes \Lambda^{l-u+1})$. By (\ref{multALambda}), 
\begin{align}
\nonumber &(\Lambda^r+\Lambda^{l-t})\otimes\Lambda^s=\\
\nonumber &(\Lambda^{r+s-1}+\Lambda^{l-t}+\Lambda^1)
 \oplus(\Lambda^{r+s}+\Lambda^{l-t})
 \oplus(\Lambda^{r+s-1}+\Lambda^{l-t+1})\oplus {\mathcal R},
\end{align}
where $P\lhd{\mathcal R}$ implies that $|P[(1,\ldots,r+s-1)]|< 2(r+s-1)$ or that $p_{r+s-1}=1$. In these cases $P\otimes\Lambda^{l-u}$ cannot contribute to the multiplicity under investigation. Similarly,
$$(\Lambda^r+\Lambda^{l-t})\otimes\Lambda^{s-1}=
(\Lambda^{r+s-1}+\Lambda^{l-t})\oplus {\mathcal R}',$$
where ${\mathcal R}'$ has the same property. One finds that the wanted multiplicity is $3-1$, which is greater than 0.
\end{proof}

\begin{Def}\label{GH}
 Let $J$ be a subdivision of $\{1,\ldots,l\}$.
For $A\in \Pa(l)$ we define
$$G_J(A)=(L/|I_1|)|A[I_1]|D[I_1]\circ\ldots\circ (L/|I_{\iota}|)|A[I_{\iota}]|D[I_{\iota}],$$
where $L=L(l)$.
For $m,n\in\{1,\ldots,\iota\}$, let
$$H_J(A,m,n)=G_J(A)+E_J(m)+F_J(n),$$ where
\begin{align}
\nonumber E_J(m)&=(\delta_{m1}\chi(\Lambda^1(I_1)))\circ\ldots\circ(\delta_{m\iota}\chi(\Lambda^1(I_{\iota})))\\ 
\nonumber F_J(n)&=(\delta_{n1} \Lambda^1(I_1))\circ\ldots\circ(\delta_{n\iota}\Lambda^1(I_{\iota})).
\end{align}
\end{Def}

Note that $G_J(A)$ is a partition, whereas $H_J(A,m,n)$ need not be.

\begin{lem}\label{Gintensor}
\begin{align}
\nonumber G_A(J)&\lhd A^{\otimes L(l)}\\
\nonumber G_A(J)+\chi_J(A)&\lhd A^{\otimes (L(l)-1)}.
\end{align}
\end{lem}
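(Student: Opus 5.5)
The plan is to work block by block on the subdivision $J=\{I_1,\ldots,I_\iota\}$ of $\{1,\ldots,l\}$ and then glue the blocks together with eq.~(\ref{multcirc}), in its stated generalization to multiple products. (I read $G_A(J)$ in the statement as $G_J(A)$.) Put $l_i=|I_i|$ and $A_i=A[I_i]\in\Pa(l_i)$. Since $l_i\leq l$, the number $L/l_i$ is a positive integer, where $L=L(l)$.

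The generalized form of (\ref{multcirc}) reduces both claims to the single-block statements
$$C_i\lhd A_i^{\otimes L},\qquad C_i'\lhd A_i^{\otimes(L-1)},$$
where
$$C_i=(L/l_i)|A_i|D(l_i),\qquad C_i'=(L/l_i)|A_i|D(l_i)+\chi(A_i).$$
The concatenations $C_1\circ\ldots\circ C_\iota$ and $C'_1\circ\ldots\circ C'_\iota$ are exactly $G_J(A)$ and $G_J(A)+\chi_J(A)$. Note that $\chi_J$ acts blockwise, and on each block $G_J(A)$ is a multiple of $D[I_i]$, so addition commutes with concatenation. The fact that these concatenations lie in $\Pa(l)$ is part of the conclusion of (\ref{multcirc}).

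For a single block I use two elementary facts.

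First, $\lhd$ is compatible with $\otimes$: if $X\lhd\mathcal M$ and $Y\lhd\mathcal N$, then $X\otimes Y\lhd\mathcal M\otimes\mathcal N$. This holds because all structure constants of $\LR$ are nonnegative, which is visible from the recursion (\ref{multALambda}).

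Second, by (\ref{multSigma}) one has $X+Y\lhd X\otimes Y$ for partitions $X,Y$.

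Combining these: if $X\lhd\mathcal M$ and $Y\lhd\mathcal N$ with $X,Y$ partitions, then $X+Y\lhd\mathcal M\otimes\mathcal N$.

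Now Lemma~\ref{Atensorl}, applied in $\Pa(l_i)$, gives $|A_i|D(l_i)\lhd A_i^{\otimes l_i}$. Taking the $(L/l_i)$-fold sum and using the combined fact repeatedly yields
$$(L/l_i)|A_i|D(l_i)\lhd \bigl(A_i^{\otimes l_i}\bigr)^{\otimes L/l_i}=A_i^{\otimes L}.$$
For the second claim, the second half of Lemma~\ref{Atensorl} gives $|A_i|D(l_i)+\chi(A_i)\lhd A_i^{\otimes(l_i-1)}$. Adding $(L/l_i-1)$ further copies of $|A_i|D(l_i)$, each contained in $A_i^{\otimes l_i}$, and applying the same combination yields
$$C_i'\lhd A_i^{\otimes(l_i-1)+l_i(L/l_i-1)}=A_i^{\otimes(L-1)}.$$
The degenerate case $l_i=1$ is harmless: there $|A_i|D(1)+\chi(A_i)=\Lambda^0\lhd A_i^{\otimes 0}$.

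I expect the main obstacle to be the gluing step. The inclusion (\ref{multcirc}) is stated for a two-fold product $B\otimes C$ with $A_i\lhd B[I_i]\otimes C[I_i]$, and I need its iterate for $L$ (respectively $L-1$) equal factors $A$. I would prove this by induction on the number of factors. In the inductive step I need, for each block, that the partial products of the $A[I_i]$ contain the partial sums built above. The point to check is that the intermediate concatenations, such as $\bigl(k_1|A_1|D(l_1)\bigr)\circ\ldots$, are themselves partitions of length $l$ whose restriction to $I_i$ is the intended block term, so that (\ref{multcirc}) can be reapplied with $B$ equal to the intermediate concatenation and $C=A$. If this bookkeeping becomes awkward, I would instead reorder the factors. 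Every $A^{\otimes k}$ contains $kA$, so I would first use $A^{\otimes L}\rhd A^{\otimes l_i}\otimes\cdots$, and then apply (\ref{multcirc}) only once at the end, to a product of partitions each of which restricts correctly to every block.
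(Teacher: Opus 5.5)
Your proposal is correct and is essentially the paper's argument. The paper's proof is the one-line remark that the lemma follows from Lemma~\ref{Atensorl} applied on each block $I_i$ and glued by eq.~(\ref{multcirc}) in its multiple-product form; you only spell out the passage from $A_i^{\otimes l_i}$ to $A_i^{\otimes L}$ and the gluing. Your worry about the gluing is unnecessary, since the paper asserts the multiple-product version, and the fact that each intermediate concatenation is a partition comes from the conclusion of (\ref{multcirc}) itself. One small correction: nonnegativity of the structure constants of $\LR$ is the Littlewood--Richardson rule, not something visible from the recursion (\ref{multALambda}), which only governs products with the $\Lambda^r$.
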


Due to eq. (\ref{multcirc}) this is an immediate consequence of Lemma (\ref{Atensorl}). $\Box$

\begin{lem}\label{Hintensor}

 Let $J, A$ be as in the previous definition. Let $1\leq\delta<\beta$ with $\tilde a_\beta>0$ and $m=J(\tilde a_\beta)$, $n=J(\tilde a_\delta+1)$.
 Then $H_J(A,m,n)$ is a partition and 
 $$H_J(A,m,n)\lhd A^{\otimes L(l)}.$$
\end{lem}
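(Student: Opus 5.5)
The plan is to reduce the statement, as in the proofs of Lemma \ref{Atensorl} and Lemma \ref{Gintensor}, to a computation on at most two columns of $A$. The extra box in $H_J(A,m,n)$ compared with $G_J(A)$ is produced by one application of eq. (\ref{exchange}).

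\textbf{Partition property.} Since $\delta<\beta$ we have $\tilde a_\delta\geq\tilde a_\beta$, so the row $\tilde a_\delta+1$ lies strictly below the row $\tilde a_\beta$, and hence $n\geq m$. Write $c_k=(L/|I_k|)\,|A[I_k]|$, so that $G_J(A)=c_1D[I_1]\circ\ldots\circ c_\iota D[I_\iota]$. These $c_k$ are weakly decreasing integers, being $L$ times the block averages of $A$. The summand $E_J(m)$ removes a box from the last row of block $m$; this is harmless because $c_m>0$, as row $\tilde a_\beta$ has length $\geq\beta$. The summand $F_J(n)$ adds a box to the first row of block $n$. When $n=m$ this is clearly a partition. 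When $n>m$ we need $c_{n-1}-\delta_{m,n-1}\geq c_n+1$. Row $\tilde a_\beta$ has length $\geq\beta$ and row $\tilde a_\delta+1$ has length $<\delta<\beta$, so somewhere between blocks $m$ and $n$ the rows of $A$ strictly drop. I would use this drop to show that the block averages, and hence the integers $c_k$, strictly separate where needed. This is the step I am least sure of: if an intermediate block has the same average as block $n$, I would first check whether the hypotheses exclude this, for instance because the drop must occur inside or at the boundary of block $n$. Otherwise I would reinterpret $J$ so that the relevant boundary is the one carrying the drop.

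\textbf{Tensor inclusion.} Write $A^{\otimes L(l)}=A^{\otimes (L(l)-2)}\otimes A\otimes A$. Apply Lemma \ref{Gintensor} blockwise through eq. (\ref{multcirc}) to most factors, and keep two factors of $A$ aside. By eq. (\ref{+}) and eq. (\ref{mult+}), the block-diagonal pieces coming from all columns except $\beta$ and $\delta$ reproduce the corresponding part of $G_J(A)$ exactly, as in Lemma \ref{Atensorl}. For the two columns $\Lambda^{\tilde a_\beta}$ and $\Lambda^{\tilde a_\delta}$, restricted to the blocks $I_m,\dots,I_n$ and combined with the determinant contributions from the remaining factors, I would bring each factor into the shape $\Lambda^r+\Lambda^{l'-t}$ on the merged interval $I_m\circ\ldots\circ I_n$ of length $l'$. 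Then I apply eq. (\ref{exchange}): its right-hand side $D(l')+\Lambda^{r+s-1}+\Lambda^{l'-t-u+1}$ moves exactly one box from the bottom of block $m$ (row $\tilde a_\beta$) to the top of block $n$ (row $\tilde a_\delta+1$). This is what $E_J(m)+F_J(n)$ records. The inequalities $r+s>0$, $t+u>0$, $r+t>0$, $s+u>0$, $r+s+t+u\leq l'$ should follow from $\tilde a_\beta>0$ and from the choice of $m,n$. Lemma \ref{smaller} or Lemma \ref{chi} can be used to realign the remaining blocks to the form $c_kD[I_k]$, after which eq. (\ref{multcirc}) glues everything together.

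The main obstacle is this cross-block bookkeeping. The two chosen copies of $A$ must be split, via eq. (\ref{mult+}), into pieces that are simultaneously compatible with the blockwise $G_J$ structure and with the hypotheses of eq. (\ref{exchange}). The case $n=m$ is easier: within a single block, eq. (\ref{exchange}) or eq. (\ref{mult2Lambda}) applies directly.
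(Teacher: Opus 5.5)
Your outline has the right ingredients (splitting $A$ into columns via eqs.~(\ref{+}) and (\ref{mult+}), restricting to $I_m\circ\cdots\circ I_n$, and invoking eq.~(\ref{exchange})). However, both halves have genuine gaps.

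\textbf{Partition property.} The missing idea is a local comparison at the two boundary blocks, using only the definitions of $\tilde a_\beta$ and $\tilde a_\delta$.

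First, every row of $I_{m+1}$ has index $>\tilde a_\beta$, hence length $<\beta\le a_{\tilde a_\beta}$, and row $\tilde a_\beta$ lies in $I_m$. So the average of $A$ over $I_m$ strictly exceeds the average over $I_{m+1}$.

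Second, for $n>1$, every row of $I_{n-1}$ has index $\le\tilde a_\delta$, hence length $\ge\delta$, while $I_n$ contains row $\tilde a_\delta+1$ of length $<\delta$. So the average over $I_{n-1}$ strictly exceeds the average over $I_n$.

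This gives $c_m>c_{m+1}$ and $c_{n-1}>c_n$, which is the paper's argument. Intermediate blocks never matter, because $E_J(m)$ and $F_J(n)$ do not touch them. Without this, two of your remarks are unjustified:
\begin{itemize}
\item You say $E_J(m)$ is harmless ``because $c_m>0$''. But $c_m>0$ says nothing about $c_m-1\ge c_{m+1}$.
\item You say $n=m$ is ``clearly a partition''. But when $|I_m|>1$ you need both strict drops.
\end{itemize}
Reinterpreting $J$ is not an option, since $J$ is given; Lemma~\ref{AmultPP} uses the present lemma for a fixed $J$.

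Also, for $n=m+1$ your condition $c_m-1\ge c_{m+1}+1$ needs a gap of $2$, which strictness alone does not give. It does hold: $c_m-c_{m+1}$ is a positive multiple of $L(l)/\mathrm{lcm}(|I_m|,|I_{m+1}|)$, and $\mathrm{lcm}(|I_m|,|I_{m+1}|)\le|I_m||I_{m+1}|\le l^2/4<l(l-1)\le L(l)$.

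\textbf{Tensor inclusion.} As you set it up, this does not go through, for two reasons.
\begin{itemize}
\item Lemma~\ref{Gintensor} only controls $A^{\otimes L(l)}$ and $A^{\otimes(L(l)-1)}$. Nothing is available for $A^{\otimes(L(l)-2)}$.
\item Feeding two raw copies of the columns $\beta,\delta$ into eq.~(\ref{exchange}) gives the wrong shape. Both factors carry the same portion $p$ of column $\beta$ in $I_m$, so you get $r=s=p$ rather than $r+s=|I_m|$. It is $r+s=|I_m|$ that places the removed box in the last row of $I_m$.
\end{itemize}

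The idea you are missing is the complementarity produced by $\chi_J$. The paper proceeds as follows.
\begin{itemize}
\item It writes $A^{\otimes L(l)}=A^{\otimes(L(l)-1)}\otimes A$ and uses $G_J(A)+\chi_J(A)\lhd A^{\otimes(L(l)-1)}$. This reduces the claim to $H_J(A,m,n)\lhd(G_J(A)+\chi_J(A))\otimes A$.
\item It splits $A=B+C$, where $B$ is the column $\beta$ if $m=n$, and the columns $\beta,\delta$ if $m<n$.
\item Lemma~\ref{chi}, applied blockwise and glued with eq.~(\ref{multcirc}), gives $(G_J(C)+\chi_J(C))\otimes C\rhd G_J(C)$. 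It also disposes of all blocks of $B$ outside $I_m\circ\cdots\circ I_n$. Eq.~(\ref{mult+}) recombines the pieces.
\item Suppose column $\beta$ meets $p$ rows of $I_m$ and column $\delta$ meets $q$ rows of $I_n$. Up to blockwise multiples of $D[I_k]$, the factor $B$ is $\Lambda^p$ on $I_m$ and $\Lambda^q$ on $I_n$. The factor $G_J(B)+\chi_J(B)$ is the complement: $\Lambda^{|I_m|-p}$ on $I_m$ and $\Lambda^{|I_n|-q}$ on $I_n$. This is the case $r+s=|I_m|$, $t+u=|I_n|$ of eq.~(\ref{exchange}).
\item For $m=n$, the same complementarity reduces the claim to $\Lambda^{|I_m|-p}\otimes\Lambda^p\rhd\Lambda^1+\Lambda^{|I_m|-1}$. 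This is the term $k=\min(p,|I_m|-p)-1$ of eq.~(\ref{mult2Lambda}).
\end{itemize}

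So the ``cross-block bookkeeping'' you identify as the main obstacle is resolved by pairing one spare copy of $A$ against the $\chi_J$-twisted $(L(l)-1)$-fold product, not by keeping two spare copies.
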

\begin{proof}

 There is at least one line in $A[I_m]$, namely line $\tilde a_\beta$, that is longer than the lines in $A[I_{m+1}]$. Thus $|A[I_m]|/|I_m| > |A[I_{m+1}]|/|I_{m+1}|$. Analogously, for $n>1$ there is at least one line in $A[I_n]$, namely line $\tilde a_\delta+1$, that is shorter than the lines in $A[I_{n-1}]$. Thus $H_J(A,m,n)$ is a partition.

By eq. (\ref{mult+}) it is sufficient to show that 
$$H_J(A,m,n)\lhd (G_A(J)+\chi_J(A))\otimes A.$$
Whenever $A=B+C$, then 
\begin{align}
(G_A(J)+\chi_J(A))&=(G_B(J)+\chi_J(B))+(G_C(J)+\chi_J(C)),\\
H_J(A,m,n)&=H_J(B,m,n)+(G_C(J)+\chi_J(C)).
\end{align}
By eq. (\ref{mult+}) it remains to show that for suitable $B$
$$H_J(B,m,n)\lhd (G_B(J)+\chi_J(B))\otimes B.$$
The restriction of this relation to $I_r$ with $r<m$ or $r>n$ is trivially true, so that by eq. (\ref{multcirc}) we only need to consider the restriction to $I_m\circ\ldots\circ I_n$.
For $m=n$ we take $B=(\tilde a_\beta)^\sim$. Since
$\tilde a_\delta+1>\tilde a_\beta$, $B[I_m]=\Lambda^t(I_m)$ for some $t$ with $1\leq t<|I_m|$. We have to consider $\Lambda^{|I_m|-t}(I_m)\otimes \Lambda^t(I_m)$ and the claim can be verified by the selection of the term with $k=r-1$ in eq. (\ref{mult2Lambda}).
For $m<n$ we take
$B=(\tilde a_\delta)^\sim + (\tilde a_\beta)^\sim$
and restrict to $I_m\circ\ldots\circ I_n$. Then the wanted result is the case $r+s=|I_m|$, $t+u=|I_n|$, $l=|I_m\circ\ldots\circ I_n|$ of eq. (\ref{exchange}). 
\end{proof}

\begin{lem}\label{HmultP}
Let $J, A$ be as before. Let $H_J(A,m,n)$ be a partition.
 Let $P\succ P'$, $|P\setminus P'|=1$,
 $J(\rho(P\setminus P'))\leq m$ and $n\leq J(\rho(P'\setminus P))$, furthermore either $m<n$ or $m=n$ and $|I_m|>1$. Then $H_J(A,m,n)\otimes P\rhd G_J(A)+P'.$
\end{lem}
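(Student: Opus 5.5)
The plan is to imitate the proof of Lemma \ref{Hintensor}: split everything into small pieces with eq.~(\ref{mult+}), keep all irrelevant parts inert with eq.~(\ref{multinert}), and settle the remaining small product with eq.~(\ref{mult2Lambda}) or eq.~(\ref{exchange}).

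\emph{Step 1: isolate the two columns of $P$ that change.} Put $\alpha=\rho(P\setminus P')$ and $\beta=\rho(P'\setminus P)$; since $P\succ P'$ we have $\alpha<\beta$. Passing from $P$ to $P'$ moves one box from the end of row $\alpha$ to the end of row $\beta$. In column language, exactly as in the proof of Lemma \ref{smaller}, we can write
$$P=R+\Lambda^{\alpha}+\Lambda^{\beta-1},\qquad P'=R+\Lambda^{\alpha-1}+\Lambda^{\beta},$$
for some partition $R$.

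\emph{Step 2: isolate the part of $G_J(A)$ that carries the defect.} Next we write $G_J(A)=G^0+G^1$ with $G^0,G^1$ partitions. The piece $G^1$ is chosen supported on $I_1\circ\ldots\circ I_n$ and built from a few columns ending at block boundaries. For $m<n$, for instance, take $G^1=\Lambda^{s_m}+\Lambda^{s_{n-1}}$, possibly plus $\Lambda^{s_n}$, where $s_i$ denotes the last element of $I_i$. This choice makes $G^1+E_J(m)+F_J(n)$ a partition. The requirement that $G^0=G_J(A)-G^1$ is still a partition has to come from the gaps between consecutive blocks of $G_J(A)$. These gaps are multiples of the averages $(L/|I_i|)|A[I_i]|$, and their positivity at the boundaries $m$ and $n$ is exactly what the hypothesis that $H_J(A,m,n)$ is a partition encodes. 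I would argue this as in the first paragraph of the proof of Lemma \ref{Hintensor}.

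\emph{Step 3: reduce to a single small product.} With these choices, eq.~(\ref{mult+}) (with $G^0\otimes R\rhd G^0+R$ from eq.~(\ref{multinert})) reduces the claim to
$$(G^1+E_J(m)+F_J(n))\otimes(\Lambda^{\alpha}+\Lambda^{\beta-1})\rhd G^1+\Lambda^{\alpha-1}+\Lambda^{\beta}.$$
The hypotheses $J(\alpha)\le m$ and $n\le J(\beta)$ mean that the box leaving $P$ lies in a block no later than the block where $G_J(A)$ loses its box. They also mean that the box entering $P'$ lies in a block no earlier than the one where $F_J(n)$ adds a box. Via eq.~(\ref{multcirc}) this lets us discard the blocks outside $I_{J(\alpha)}\circ\ldots\circ I_{J(\beta)}$, and then collapse the relevant stretch with eq.~(\ref{multD}) so that only a product of two two-column partitions remains.

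\emph{Step 4: the final computation, and where the difficulty lies.} For $m<n$ the remaining product should be an instance of eq.~(\ref{exchange}) on the interval $I_m\circ\ldots\circ I_n$. The parameters $r,s,t,u$ are read off from the positions of $\alpha$ and $\beta$ relative to $s_m$ and $s_{n-1}+1$, and the positivity conditions of eq.~(\ref{exchange}) follow from $\alpha<\beta$ together with the two block inequalities. For $m=n$ the hypothesis $|I_m|>1$ is needed: the term of eq.~(\ref{mult2Lambda}) that shifts one box requires a block with at least two rows, which is the same selection of a term with $k=1$ as in Lemma \ref{Hintensor}. I expect Step 2 to be the main obstacle. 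It requires choosing $G^1$ so that both $G^1+E_J(m)+F_J(n)$ and $G_J(A)-G^1$ are partitions while the residual product still matches eq.~(\ref{exchange}), and it also requires checking the degenerate cases $\alpha=1$ or $\beta=l$, where $rstu=0$ in eq.~(\ref{exchange}).
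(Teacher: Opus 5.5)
\paragraph{Verdict.} Your Steps 1--3 are sound, but Step 4 rests on the wrong identity, and the case $m=n$ is not carried out. The repair is short.

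\paragraph{What works.} Write $s_i=\max I_i$, $\alpha=\rho(P\setminus P')$ and $\beta=\rho(P'\setminus P)$. With $G^1=\Lambda^{s_m}+\Lambda^{s_{n-1}}$ one gets $G^1+E_J(m)+F_J(n)=\Lambda^{s_m-1}+\Lambda^{s_{n-1}+1}$. The strict drops of $G_J(A)$ at $s_m$ and $s_{n-1}$ that make $G^0$ a partition do follow from $H_J(A,m,n)$ being a partition. Eq.~(\ref{mult+}) then correctly reduces the claim to
\[
(\Lambda^{s_m-1}+\Lambda^{s_{n-1}+1})\otimes(\Lambda^{\alpha}+\Lambda^{\beta-1})\rhd\Lambda^{s_m}+\Lambda^{s_{n-1}}+\Lambda^{\alpha-1}+\Lambda^{\beta}.
\]

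\paragraph{The gap in Step 4.}
\begin{itemize}
\item The displayed relation is not an instance of eq.~(\ref{exchange}). The right-hand side of (\ref{exchange}) is $D(l)+\Lambda^{r+s-1}+\Lambda^{l-t-u+1}$, so (\ref{exchange}) \emph{creates} the defect $E_J(m)+F_J(n)$. That is its role in Lemma~\ref{Hintensor}, where $H_J(A,m,n)$ is the output. Here $H_J(A,m,n)$ is a factor, and the defect has to be \emph{absorbed}; your target has four columns, in general none of them full.
\item Restricting to $I_m\circ\ldots\circ I_n$ is not legitimate. The hypotheses only give $J(\alpha)\le m$ and $J(\beta)\ge n$, so $\alpha$ and $\beta$ may lie in other blocks.
\item After restricting to $I_{J(\alpha)}\circ\ldots\circ I_{J(\beta)}$ there is in general no full column for eq.~(\ref{multD}) to strip off.
\item For $m=n$ you give no choice of $G^1$.
\end{itemize}

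\paragraph{The repair.} Split once more with eq.~(\ref{mult+}) into single-column products and apply eq.~(\ref{mult2Lambda}):
\[
\Lambda^{s_m-1}\otimes\Lambda^{\alpha}\rhd\Lambda^{s_m}+\Lambda^{\alpha-1}\quad(\alpha\le s_m),\qquad \Lambda^{s_{n-1}+1}\otimes\Lambda^{\beta-1}\rhd\Lambda^{s_{n-1}}+\Lambda^{\beta}\quad(\beta\ge s_{n-1}+1).
\]
The two side conditions are exactly $J(\alpha)\le m$ and $n\le J(\beta)$.

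For $m=n$, take $G^1=\Lambda^{s_{m-1}}+\Lambda^{s_m}$ with $s_0=0$. Then $G^1+E_J(m)+F_J(m)=\Lambda^{s_{m-1}+1}+\Lambda^{s_m-1}$. The drops needed for $G^0$ come from $H_J(A,m,m)$ being a partition together with $|I_m|>1$. The same pairing finishes: $\Lambda^{s_{m-1}+1}$ with $\Lambda^{\beta-1}$, and $\Lambda^{s_m-1}$ with $\Lambda^{\alpha}$.

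\paragraph{Relation to the paper.} This is the mechanism of Lemma~\ref{smaller}, which the paper invokes: it treats $H_J(A,m,n)$ and $G_J(A)$ as partitions at distance $1$ and pairs their two changed columns with those of $P,P'$. If you cite Lemma~\ref{smaller} instead of pairing by hand, be careful about direction. For $m<n$ the box moves \emph{down} from $G_J(A)$ to $H_J(A,m,n)$, so $G_J(A)\succ H_J(A,m,n)$; for $m=n$ it moves up. The explicit pairing above works in both cases.

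Your expectation that Step 2 would be the main obstacle was off. Step 2 is routine; the substantive point is choosing the right column pairing in Step 4.
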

\begin{proof}

 One has $H_J(A,m,n)\succ G_J(A)$, $|H_J(A,m,n)\setminus G_J(A)|=1$ and can apply Lemma (\ref{smaller}). 
\end{proof}

\begin{lem}\label{AmultPP}
Let $J$ be a subdivision of $\{1,\ldots,l\}$ and $A,B\in \Pa(l)$. Let $A\succ B$, $|A\setminus B|=k$.
 Then $A^{\otimes kL(r)}\otimes A\rhd (kG_J(A)+B).$
\end{lem}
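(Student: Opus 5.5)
The plan is to interpolate between $A$ and $B$ with Lemma \ref{Psequence} and to pay for each unit step with one factor $A^{\otimes L}$, using the partition $H_J(A,m,n)$ produced by Lemma \ref{Hintensor}. I read $L(r)$ in the statement as $L=L(l)$, the integer used in Definition \ref{GH}, and write $G=G_J(A)$. Let $A=P^0\succ P^1\succ\ldots\succ P^k=B$ be the sequence of Lemma \ref{Psequence}. By induction on $j=0,\ldots,k$ I will prove
$$A^{\otimes jL}\otimes A\rhd jG+P^j .$$
The case $j=0$ is trivial, and $j=k$ is the claim.

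For the induction step, suppose we have $j<k$ and, for suitable $m,n$,
$$A^{\otimes L}\rhd H_J(A,m,n) \quad\text{and}\quad H_J(A,m,n)\otimes P^j\rhd G+P^{j+1}.$$
The first relation is Lemma \ref{Hintensor}; the second is Lemma \ref{HmultP} applied with $P=P^j$, $P'=P^{j+1}$. Since all structure constants of $\LR$ are nonnegative, $\lhd$ is compatible with multiplication, so the induction hypothesis gives
$$A^{\otimes (j+1)L}\otimes A\rhd H_J(A,m,n)\otimes(jG+P^j).$$
By eq. (\ref{multinert}) the right-hand side satisfies
$$H_J(A,m,n)\otimes(jG+P^j)\rhd jG\dotplus\bigl(H_J(A,m,n)\otimes P^j\bigr)\rhd (j+1)G+P^{j+1},$$
which completes the step.

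The real work, and the step I expect to be the main obstacle, is choosing $\beta,\delta$ (and hence $m,n$) so that both lemmas apply. Let the box removed in passing from $P^j$ to $P^{j+1}$ be $(\rho_1,c)$ and the box added be $(\rho_2,c')$. Since $P^j\succ P^{j+1}$ at distance one, $\rho_1<\rho_2$.

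By Lemma \ref{Psequence}, $(\rho_1,c)\in Y(A)\setminus Y(B)$, so $\tilde a_c\geq\rho_1$. Taking $\beta=c$ gives $\tilde a_\beta>0$ and $m=J(\tilde a_\beta)\geq J(\rho_1)$, as Lemma \ref{HmultP} requires.

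Likewise $(\rho_2,c')\in Y(B)\setminus Y(A)$, so $\tilde a_{c'}<\rho_2$, i.e. $\tilde a_{c'}+1\leq\rho_2$. Taking $\delta=c'$ gives $n=J(\tilde a_\delta+1)\leq J(\rho_2)$. The partition conditions give $c'=p^j_{\rho_2}+1\leq p^j_{\rho_1}=c$, so $\delta\leq\beta$.

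Two points remain to be checked.

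First, Lemma \ref{Hintensor} needs the strict inequality $\delta<\beta$. If $c'=c$, I would replace $\delta$ by a smaller column, or $\beta$ by a larger one. The aim is to keep $\tilde a_\beta\geq\rho_1$ and $\tilde a_\delta+1\leq\rho_2$; the inclusions $Y(P^j)\setminus Y(A)\subset Y(B)\setminus Y(A)$ from Lemma \ref{Psequence} are what should make such a choice possible. In the worst case $\delta=0$ must be allowed, with $\tilde a_0$ read as $l$, i.e. $n=\iota$.

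Second, Lemma \ref{HmultP} needs either $m<n$, or $m=n$ with $|I_m|>1$. If $m>n$, the inequalities $J(\rho_1)\leq m$ and $n\leq J(\rho_2)$ still hold, but one must argue that $m\leq n$ can be arranged, using $\tilde a_\delta\geq\tilde a_\beta$. If $m=n$ and $|I_m|=1$, then $\rho_1=\rho_2$ would be forced, contradicting $\rho_1<\rho_2$.
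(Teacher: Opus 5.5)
Your argument is the paper's proof: the same interpolating sequence, the same choice of $\beta,\delta$ as the columns of the removed and added boxes, and the same chain through Lemma~\ref{Hintensor}, the induction hypothesis, eq.~(\ref{multinert}) and Lemma~\ref{HmultP}. Your derivation of $J(\rho_1)\le m$ and $n\le J(\rho_2)$ also matches the paper. The gap is that you leave the remaining hypotheses of those two lemmas unverified, and the patches you sketch do not work.

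\begin{itemize}
\item Allowing $\delta=0$ is excluded by Lemma~\ref{Hintensor}, which needs $1\le\delta$. Reading $\tilde a_0=l$ would also require $J(l+1)$, which is undefined.
\item In the case $m=n$, $|I_m|=1$, your inequalities only give $\rho_1\le\tilde a_\beta=\tilde a_\delta+1\le\rho_2$. They do not force $\rho_1=\rho_2$.
\item Your inequality $c'\le c$ does not follow from the partition condition on $P^j$ alone. That condition only gives $c'\le c+1$.
\end{itemize}

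No patch is needed. Apply the partition condition to $P^{j+1}$ instead of $P^j$. Since $p^{j+1}_{\rho_1}=c-1$, $p^{j+1}_{\rho_2}=c'$ and $\rho_1<\rho_2$, you get $c'\le c-1$. So your original choice already satisfies $1\le\delta=c'<c=\beta$, with $\delta\ge 1$ because $c'=p^{j+1}_{\rho_2}\ge1$.

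Column lengths weakly decrease, so $\tilde a_\beta\le\tilde a_\delta<\tilde a_\delta+1$. The intervals of $J$ are consecutive, so $J$ is weakly increasing, which gives $m=J(\tilde a_\beta)\le J(\tilde a_\delta+1)=n$. If $m=n$, then $I_m$ contains the two distinct integers $\tilde a_\beta$ and $\tilde a_\delta+1$, so $|I_m|\ge 2$.

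This is the paper's one line ``$\tilde a_\beta\le\tilde a_\delta$, so that $m<n$ or $m=n$ and $|I_m|\geq 2$''. With it inserted, your proof is complete.
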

\begin{proof}

Let $P^0,\ldots,P^k$ be an interpolating sequence as in Lemma (\ref{Psequence}) with $P^0=A$ and $P^k=B$. We use induction on $k$. The case $k=0$ is trivial. Let
$Y(P^{k-1})\setminus Y(P^k)=\{(\alpha,\beta)\}$,\ \ $Y(P^k)\setminus Y(P^{k-1})=\{(\gamma,\delta)\}$, $m=J(\tilde a_\beta)$, $n=J(\tilde a_\delta+1)$. Because $\{(\alpha,\beta)\}\in Y(A)$ and $\{(\gamma,\delta)\}\notin Y(A)$ we have
$J(\alpha)\leq m$, $n\leq J(\gamma)$. Moreover, $\tilde a_\beta\leq \tilde a_\delta$, so that $m<n$ or $m=n$ and
$|I_m|\geq 2$.

By Lemma (\ref{Hintensor}), the induction hypothesis, eq. (\ref{multinert}) and Lemma (\ref{HmultP}) we have
\begin{align}
\nonumber  A^{\otimes kL(r)}\otimes A\ &\rhd H_J(A,m,n)\otimes A^{\otimes (k-1)L(r)}\otimes A\\
\nonumber &\rhd H_J(A,m,n)\otimes ((k-1)G_J(A)+P^{k-1})\\
\nonumber &\rhd (k-1)G_J(A)\dotplus (H_J(A,m,n)\otimes P^{k-1})\\
\nonumber &\rhd kG_J(A)+P^k.
\end{align}
\end{proof}

\begin{thm}\label{Thm}
 For any $A\in \Pa(l)$ there exists $N_A\in\NN$ so that for $n\geq N_A$ and any partition $B$ with $B\preccurlyeq nA$ one has $B\lhd A^{\otimes n}$.
\end{thm}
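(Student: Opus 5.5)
Fix $A\in\Pa(l)$ and write $L=L(l)$. We may assume $A\neq 0$. Every $B\preccurlyeq nA$ lies in $\Pa(l)$, since the strict length of $B$ is at least that of $nA$ only if\ldots; more simply, any $B$ with $B\lhd A^{\otimes n}$ must lie in $\Pa(l)$ by the grading and ideal structure of $\LR(l)$. So the target set is the finite set of $B\in\Pa(l)$ with $|B|=n|A|$ and $B\preccurlyeq nA$. The plan is to show that each such $B$ can be written as $B=(n-q)A' + B'$, where $A'$ is a fixed partition with $A'\lhd A^{\otimes s}$ for a bounded $s$, and $B'$ is reached from a bounded multiple of $A$ by Lemma \ref{AmultPP}. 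Multiplicativity (\ref{mult+}) and (\ref{multSigma}) then glue the pieces.

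\textbf{Choice of subdivision.} The key is to attach to $B$ the subdivision $J$ of $\{1,\ldots,l\}$ given by the blocks on which the averaged inequalities are tight. Concretely, take the cut points $r$ for which $|B[(1,\ldots,r)]|=n|A[(1,\ldots,r)]|$, together with $r=l$. On each block $I_i$ one then has $|B[I_i]|=n|A[I_i]|$. Since there are only finitely many subdivisions, it suffices to find a bound $N_J$ for each $J$ separately and take the maximum.

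\textbf{Splitting off copies of $G_J(A)$.} For fixed $J$, write $n=qL+p$ with $0\le p<L$. By Lemma \ref{Gintensor}, $G_J(A)\lhd A^{\otimes L}$. Hence, by (\ref{mult+}),
$$(q-k)G_J(A)\dotplus(\text{anything}\lhd A^{\otimes (kL+p)})\lhd A^{\otimes n}.$$
On each block, $G_J(A)$ restricts to a multiple of $D[I_i]$, and $B$ restricted to a block has the same weight as $nA$ there. So I want to show $B=(q-k)G_J(A)+C$ for some $C\preccurlyeq (kL+p)A$ with bounded $|C\setminus(kL+p)A|$ and bounded $k$.

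\textbf{Applying Lemma \ref{AmultPP}.} To close the gap I would use Lemma \ref{AmultPP}, which gives $A^{\otimes kL}\otimes A\rhd kG_J(A)+C$ for $C\prec A$ at distance $k$. It should be applied repeatedly, with $A$ replaced by suitable sub-blocks, and combined via (\ref{multcirc}) over the blocks of $J$.

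\textbf{Main obstacle.} The hard step is a uniformity statement. Strictness of the inequalities inside each block must force $B$ to be a $G_J(A)$-translate of something within bounded distance of a bounded multiple of $A$. To get this, I would use that the inequalities with strict slack grow linearly in $n$, so that subtracting $D[I_i]$ multiples preserves both partition-ness and dominance. Meanwhile the "fractional part" of $B$ modulo the block determinants ranges over a finite set, which gives the bound $N_A$. If that fails, the fallback is induction on $l$ through the block decomposition and (\ref{multD}).
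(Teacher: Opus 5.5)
\textbf{There is a genuine gap.} It sits exactly in the step you flag as the main obstacle: the uniformity statement is false. No bound $N_J$ can come from splitting off copies of a single $G_J(A)$ and leaving a remainder that is a bounded perturbation of a bounded multiple of $A$.

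Take $l=2$ and $A=(1,0)$, so $L=2$, $G_{\{1,2\}}(A)=(1,1)$ and $G_{\{1\},\{2\}}(A)=(2,0)$. Let $B=(n-m,m)$ with $1\le m<n/2$.
\begin{itemize}
\item Your tight subdivision is the single block $\{1,2\}$. Then $B-(q-k)(1,1)$ has second entry $m-q+k$, so you need $k\ge q-m$, which is unbounded for $m=1$.
\item The slack $n-b_1=m$ of the strict inequality does not grow with $n$ at all, contrary to your claim. Subtracting block-determinant multiples therefore destroys partition-ness.
\item Switching to the other subdivision does not help uniformly: $B-(q-k)(2,0)=(p+2k-m,m)$ is a partition only when $2k\ge 2m-p$.
\item For $m\approx n/4$ both choices fail, because $B=\tfrac{n-2m}{2}(2,0)+m(1,1)$ genuinely needs two generators with large coefficients.
\end{itemize}
In general, with $k$ and $p$ bounded, your construction only reaches a bounded neighbourhood of each ray $\NN G_J(A)$. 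That is $O(1)$ partitions of each weight, whereas the number of $B\preccurlyeq nA$ grows with $n$ (here it is about $n/2$, and for larger $l$ the cone has dimension up to $l$).

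\textbf{The missing idea is to use all subdivisions at once.} The paper imports from the authors' earlier work (Lemma 3.11 there) that $\mathcal{C}(A)\otimes\QQ_{\ge 0}$ is generated by the $G_J(A)$, with $J$ ranging over all subdivisions of $\{1,\ldots,l\}$. It also uses that $\mathcal{C}'(A)$, the set of $B$ with $B\lhd A^{\otimes |B|/|A|}$, is a semigroup by (\ref{multSigma}). The argument then runs as follows.
\begin{enumerate}
\item Triangulate the cone. Every $B$ becomes $\sum_{J\in S}\kappa_J G_J(A)$ with linearly independent $G_J(A)$ and $\kappa_J\in\QQ_{\ge 0}$ of bounded denominator.
\item Split $\kappa_J=\lfloor\kappa_J\rfloor+\lambda_J$. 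The fractional parts $P$ lie in a finite set $\Phi$.
\item Lemma \ref{AmultPP}, applied to $n_PA$ in place of $A$, is used only to show $N(P,J)G_J(A)+P\in\mathcal{C}'(A)$ for each $P\in\Phi$ and each $J$.
\item Since $\sum_J\kappa_J=n/L$, for large $n$ some $\lfloor\kappa_J\rfloor$ exceeds $N(P,J)$.
\item The semigroup property assembles $B$ from that piece and the remaining integer multiples of the $G_{J'}(A)$, which lie in $\mathcal{C}'(A)$ by Lemma \ref{Gintensor}.
\end{enumerate}
Your reduction to tight blocks via (\ref{multcirc}) is harmless, but it does nothing for points near the boundary faces of the cone. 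Your fallback induction on $l$ is not developed far enough to handle them.

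A minor point: inside $\Pa$, the condition $B\preccurlyeq nA$ does not force $B\in\Pa(l)$; for example $(1,\ldots,1)\preccurlyeq(n)$. The theorem must be read with dominance taken in $\Pa(l)$, and your justification via $B\lhd A^{\otimes n}$ argues from the conclusion.
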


\begin{proof}

The set of partitions $B$ for which $B\preccurlyeq nA$ for some $n\in\NN_0$ forms a cone ${\mathcal C}(A)$ in $\NN_0^l$. By equation (\ref{multSigma}), the partitions $B$ for which $B\lhd A^n$ for some $n$ form a subsemigroup ${\mathcal C}'(A)$ of ${\mathcal C}(A)$. 
Let $\mathcal J$ be the set of all subdivisions of $\{1,\ldots,l\}$.
By Lemma (\ref{Gintensor}), $G_A(J)\in {\mathcal C}'(A)$ for any $J\in {\mathcal J}$. 
In (\cite{ample}, Lemma (3.11)) we have shown that ${\mathcal C}(A)\otimes \QQ_{\geq 0}$ is generated by the set of $G_A(J)$ with $J\in {\mathcal J}$. The cone ${\mathcal C}(A)$ can be written as a finite union of simplicial cones. These simplicial cones are labeled by subsets $S\in {\mathcal J}$ such that
the sequences $G_J(A)$ with $J\in S$ are linearly independent.
Any $B$ in such a simplicial cone has the form
$B=\sum_{J\in S} \kappa_J G_A(J)$, where the $\kappa_J$ are numbers in $\QQ_{\geq 0}$ with bounded denominators. We write $\kappa_J = \lfloor \kappa_J\rfloor +\lambda_J$.
The partitions $P$ of the form $\sum_{J\in S}\lambda_J G_A(J)$ form a finite subset $\Phi$ of ${\mathcal C}(A)$. To show that ${\mathcal C}(A)\setminus {\mathcal C}'(A)$ is a finite set, it suffices to show that for any $P\in\Phi$ and any $J\in {\mathcal J}$ there is an integer $N(P,J)$ such that $N(P,J)G_A(J)+P \in {\mathcal C}'(A)$. Indeed, then $\sum_{J\in\Sigma} \rho_J G_A(J)\in {\mathcal C}'(A)$ whenever $\rho_J \geq N(P,J)$ for at least one $J\in {\mathcal J}$.
Let $|P|=n_P|A|$, $|P\setminus n_PA|=k_P$.
Since $G_J(A)\preccurlyeq L(l)A$, one as $P\preccurlyeq n_PA$.
By Lemma (\ref{AmultPP}) one has
$(k_Pn_PG_J(A)+P)\lhd (n_PA)^{\otimes k_PL(r)}\otimes n_PA$, where we have used that $G_J(n_PA)=n_PG_J(A)$. By eq. (\ref{multSigma}), $nA\lhd A^{\otimes n}$. Thus choosing $N(P,J)=k_Pn_P$ is sufficient and the theorem is true with
$N_A=L(l)|{\mathcal J}|M$, where $M$ is the maximum of $(k_P+1)n_P$ over $P$ in $\Phi$. 
\end{proof}

 \begin{rem}
 Small examples and some arguments suggest that $N_A=l$ is sufficient for all $A$ in $\Pa(l)$. This is true for $l<4$, but we are not aware of a general proof. The statement of the theorem becomes wrong when one demands $N_A<l$. What happens for exponent $N=2$ is known but quite intricate.
For exponents between $2$ and $l$ it is easy to write down a natural interpolating statement, but we do not have enough experience in combinatorics to advance along this path. Thus we have used an approach that is blunt but straightforward.
\end{rem}

On behalf of all authors the corresponding authors state that there is no conflit of interrest

  \end{document}